\documentclass[reqno,11pt]{amsart}
\usepackage{amsmath,amsthm,amsfonts,color,graphicx,bbm,mathabx}

\usepackage[latin1]{inputenc}
\usepackage[makeroom]{cancel}
\usepackage{pdfsync,subcaption,multirow,float,esint}
\usepackage{filecontents,diagbox,bbold,dsfont,enumerate}

\definecolor{lightGray}{RGB}{235,235,235}
\definecolor{orange}{RGB}{255,128,0}
\definecolor{ucib}{RGB}{0,36,105}
\definecolor{mygreen}{RGB}{0,128,0}
\definecolor{lightBlue}{RGB}{102,153,204}

\newtheorem{thm}{Theorem}[section]
\newtheorem{lem}[thm]{Lemma}
\newtheorem{cor}[thm]{Corollary}
\newtheorem{prop}[thm]{Proposition}

\newtheorem{rems}[thm]{Remarks}
\newtheorem{rem}[thm]{Remark}

\newtheorem{deff}[thm]{Definition}

\DeclareMathAlphabet{\mathpzc}{OT1}{pzc}{m}{it}

\numberwithin{equation}{section}

\begin{document}
\bibliographystyle{plain}

\title{Geometric Kernel Interpolation and Regression}

\author{Patrick Guidotti}
\address{University of California, Irvine\\
Department of Mathematics\\
340 Rowland Hall\\
Irvine, CA 92697-3875\\ USA }
\email{gpatrick@math.uci.edu}

\begin{abstract}
  Exploiting the variational interpretation of kernel interpolation we
  exhibit a direct connection between interpolation and regression,
  where interpolation appears as a limiting case of regression. By
  applying this framework to point clouds or samples of smooth
  manifolds (hypersurfaces, in particular), we show how fundamental
  geometric quantities such as tangent plane and principal curvatures
  can be computed numerically using a kernel based (approximate) level
  set function (often a defining function) for smooth hypersurfaces.
  In the case of point clouds, the approach generates an interpolated
  hypersurface, which is an approximation of the underlying manifold
  when the cloud is a sample of it. It is shown how the geometric
  quantities obtained can be used in the numerical
  approximation/computation of geometric operators like the surface
  gradient or the Laplace-Beltrami operator in the spirit of kernel
  based meshfree methods. Kernel based interpolation can be extremely
  ill-posed, especially when using smooth kernels, and the regression
  approximation offers a natural regularization that proves also quite
  useful when dealing with geometric or functional data that are
  affected by errors or noise.
\end{abstract}

\keywords{Kernel based interpolation and regression, point cloud
  interpolation and analysis, functions and operators on point clouds}
\subjclass[1991]{}

\maketitle

\section{Introduction}
Kernel Methods play an important role in interpolation theory as they
nicely bridge the gap between infinite dimensional function spaces and
finite dimensional approximations thereof. We shall make this statement more
precise below when we highlight the optimization aspect of kernel
interpolation. Kernel methods are meshfree methods that come with a
solid theoretical foundation \cite{Fass15,W04}. They can be used
for interpolation as well as for a variety of other purposes that
range from the numerical resolution of differential equations and
(initial) boundary value problems to classification and statistical
learning \cite{RW06}. A particularly relevant connection to Gaussian Process
Regression will be briefly discussed as it provides an alternative 
justification for the regularized kernel-based ``quasi''-interpolation
that proves useful in the context of noisy data.

The main contribution of this paper consist of the realization, new to
the best of our knowledge, and subsequent exploitation, of the fact
that kernel methods (and the related regression methods) can be used
directly to construct viable ``defining'' (level set) functions of hypersurfaces (or
more general submanifolds), which then give direct access to global
(localizable) interpolation of their geometric properties (normals and
curvatures) starting with a (not necessarily uniform) sample of their
points (the point cloud). With these in hand, 
it also becomes possible to stably compute geometric operators of functions
simply given through their values on the unstructured
(sampling) point cloud in the spirit of meshfree methods. The point cloud 
(sample) is viewed as a discrete embedding and the proposed use of
kernel methods combined with formul\ae{} in terms of ambient space
coordinates yields numerical discretizations in extrinsic
coordinates. In other words, geometric quantities and operators can be
computed directly from the point cloud and applied to functions
defined on it. The approximate computation of curvatures and
differential operators on manifolds is immediately relevant within
applied mathematics and numerical analysis as they can directly be
employed in order to numerically solve a variety of natural PDEs on
manifolds. It does, however, also play an important role in computer
graphics and vision (\cite{K92,Taubin95,MDSB03}). The use of geometric
operators can also be used in the case of noisy or corrupted samples.
Following ideas first pioneered in a full space context, denoising of
(surface) meshes  can, for instance, be performed by using normal
motion by mean curvature (see \cite{DMSB99}). The approach to
corrupted or noisy clouds taken here does not require explicitly
evolving the noisy sample since denoisining is implicit and automatic
in the sense that the regularization yielding approximate
interpolation or regression results in smoother level sets
of the approximate interpolant. Numerical access to geometric
information is also of interest in applications beyond
denoising. Examples of such applications are mesh simplification
(\cite{GH97,HG99}), surface modeling \cite{MS92}, and fairing of
meshes \cite{CDR03}, where a formulation as a Laplace-Beltrami
diffusion of the coordinates is used. Other applications include
surface reconstruction \cite{BTSAGSS17},  shape recognition or registration
\cite{GG06}, cortical morphology and organ shape analysis (see
e.g. \cite{FD00,DH22,SZ25}), and many 
more. It is worth mentioning that, contrary to what is typically done
in the literature \cite{CBCMDMBE01,W04,Fass15,Pi12}, we work with the
point cloud itself without augmenting it with two layers of additional points
placed at a fixed distance from the surface in normal and negative normal
direction (which requires additional knowledge about the normal
vectors and introduces additional parameters that influence the
accuracy of the method). Finally the approach is meshfree and, as
such, does not require a sophisticated structured representation of
the surfaces of interest and, in fact, in a smooth 
context, works also with non-necessarily dense or regular
discretizations/samples as will be shown in the final section.

The analysis proposed in this paper offers some new insights but also
integrates many previous observations that have appeared in the kernel
interpolation and radial basis function literature (including
applications to PDEs), in applications to visualization for the
reconstruction, representation, and processing of curves and surfaces,
and in the statistical literature.

It was already observed in \cite{FW12} that positive definite full
space kernels $K: \mathbb{R}^d\times \mathbb{R}^d\to \mathbb{R}$, 
can be used to perform interpolation of functions defined on embedded
manifolds $\mathcal{M}\subset \mathbb{R}^d$, not necessarily of
codimension one. In \cite{Pi12} it is shown how radial basis functions can
be successfully employed in order to compute numerical solution to PDEs
on surfaces. The use of kernel interpolation for scattered data is
also discussed in \cite{W04}, where one also finds an exhaustive
treatment of positive kernels, kernel interpolation theory, approximation
results, and much more. Finally, the spline based regression
introduced in \cite{Wah90} also exploits the structure and framework
provided by general kernel methods even if in a specific
incarnation and was used in \cite{CBCMDMBE01} for dealing with noisy
surface data in a similar way as it is done here in a more general
context. We shall give additional detail in the following
sections. Finally there is also a natural connection to manifold
learning but we refer to \cite{G25} for a more thorough discussion of
this aspect.

\section{Kernel Interpolation}
While we refer the reader to the literature \cite{W05,Fass15} for a
comprehensive account of kernel methods, we provide a brief
introduction here with the goal of revealing the insight that leads to
the geometric use announced in the introduction.
\begin{deff}
An at least continuous kernel $K:\mathbb{R}^d\to \mathbb{R}$ is called
{\em positive definite} if and only if the matrix
$$
K(\mathbb{X},\mathbb{X})=[K(x_j-x_k)]_{1\leq j,k\leq |\mathbb{X}|}\in
\mathbb{R}^{|\mathbb{X}|\times|\mathbb{X}|}
$$
is symmetric positive definite for every choice of a set
$\mathbb{X}=\{ x^1,\dots,x^{|\mathbb{X}|}\}$ containing
$|\mathbb{X}|<\infty$ distinct points.
\end{deff}
This allows one to use $\big\{ K(\cdot-x_k \,\big |\,
k=1,\dots,|\mathbb{X}|\big\}$ as ``basis functions'' in the
interpolation problem consisting of finding a function
$f:\mathbb{R}^d\to \mathbb{R}$ with $f(x^j)=y^j\in \mathbb{R}$ for
$j=1,\dots,|\mathbb{X}|$, or $f(\mathbb{X})=\mathbb{Y}$ for
short. Indeed the corresponding system takes the form
$$
\bigl(\sum_{k=1}^{|\mathbb{X}|}K(\cdot-x^k)\lambda_k\bigr)\big |_\mathbb{X}
=K(\mathbb{X},\mathbb{X})\Lambda=\mathbb{Y}
$$
and is solvable whenever $K$ is positive definite. In this way, one
obtains an everywhere defined interpolant
\begin{equation}\label{interpolant}
  u_{\mathbb{X},\mathbb{Y}}=K(\cdot,\mathbb{X})K(\mathbb{X},\mathbb{X})^{-1}\mathbb{Y}.
\end{equation}
The usefulness and power of the method stems from the fact that it is
possible to associate to $K$ an infinite dimensional Hilbert space of functions
$\mathcal{H}_K$, the so-called reproducing kernel Hilbert space, often
known as the native space, with the remarkable property that
\begin{equation}\label{op}
  u_{\mathbb{X},\mathbb{Y}}=\operatorname{argmin}_{u(\mathbb{X})=\mathbb{Y}}E_0(u),\:
  E_0(u)=\frac{1}{2}\|u\|^2_{\mathcal{H}_K}. 
\end{equation}
The Hilbert space $\mathcal{H}_K$ is the completion of the finite
linear combinations
$K(\cdot,\mathbb{X})\alpha=\sum_{k=1}^{|\mathbb{X}|}\alpha_kK(\cdot-x^k)$
with norm defined by
$$
\bigl( K(\cdot,\mathbb{X}_1)\alpha^1\big |
K(\cdot,\mathbb{X}_2)\alpha^2\bigr)
=(\tilde \alpha ^1)^\top K(\mathbb{X},\mathbb{X})\tilde \alpha^2,
$$
where $\mathbb{X}=\mathbb{X}_1\cup \mathbb{X}_2$ and $\tilde \alpha_i\in
\mathbb{R}^{|\mathbb{X}|}$ is the trivial extension of the vector
$\alpha_i\in \mathbb{R}^{|\mathbb{X}_i|}$ obtained by setting to zero
the missing components corresponding to $\mathbb{X}_i \setminus
\mathbb{X}_j$ ($j\neq i$) for $i=1,2$.
\begin{rems}
  \begin{enumerate}[(a)]
  \item The fact that the minimizer has the form
    $K(\cdot,\mathbb{X})\Lambda$ for some $\Lambda\in
    \mathbb{R}^{|\mathbb{X}|}$ goes by the name {\em Representer
      Theorem} in the more applied literature.
  \item The name Reproducing Kernel Hilbert Space (RKHS) is justified
    by the central property that
    $$
    \int K(x-y)u(y)\, dy=u(x),\: x\in \mathbb{R}^d,
    $$
    for every $u\in \mathcal{H}_K$. If $u$ is replaced by a translate
    of the kernel, the integration yields back the kernel itself. This
    formula also shows that $\delta_x\in \mathcal{H}_K^*$.
  \end{enumerate}
\end{rems}
In order to introduce the proposed extension, we now turn things on
their head and start with the optimization problem \eqref{op} for
which it is natural to consider the closely related problem
\begin{equation}\label{opa}
  \operatorname{argmin}_{u\in \mathcal{H}_K}\frac{1}{2}\left\{\|
      u\|^2_{\mathcal{H}_K}+ 
      \frac{1}{\alpha}|u(\mathbb{X})-\mathbb{Y}|^2\right\}
    =\operatorname{argmin}_{u\in \mathcal{H}_K}E_\alpha(u)
\end{equation}
for $\alpha>0$. One can think of Problem \eqref{op} as the pure
interpolation problem and of \eqref{opa} as the approximate
interpolation or regression problem. For these problems we have the
following basic results.
\begin{prop}\label{basicExistence}
  The optimization problems \eqref{opa}, $\alpha>0$,  and \eqref{op}
  possess a unique minimizer
  $u^\alpha_\mathcal{\mathbb{X},\mathbb{Y}}\in \mathcal{H}_K$.
  For $\alpha>0$, the minimizer $u^\alpha_{\mathbb{X},\mathbb{Y}}$ is
  a weak solution of the equation 
\begin{equation}\label{eulera}
\mathcal{A}_K
u=\frac{1}{\alpha}[\mathbb{Y}-u(\mathbb{X})]^\top\delta_\mathbb{X}
=\frac{1}{\alpha}\sum_{k=1}^{|\mathbb{X}|}[y^k-u(x^k)]\delta_{x^k},
\end{equation}
i.e. a solution of the equation in $\mathcal{H}_K^*$. If
$\alpha=0$, then it holds that
$$
\mathcal{A}_Ku^0_{\mathbb{X},\mathbb{Y}}=\Lambda^\top
\delta_\mathbb{X}=\sum_{k=1}^{|\mathbb{X}|}\lambda_k \delta_{x^k}
$$
for some Lagrange multiplier $\Lambda\in
\mathbb{R}^{|\mathbb{X}|}$. The operator $\mathcal{A}_K$ is defined by
the validity of the identity $(u|\varphi)_{\mathcal{H}_K}=\langle
\mathcal{A}_Ku,\varphi\rangle_{\mathcal{H}_K^*,\mathcal{H}_K}$ for
$u,\varphi\in \mathcal{H}_K$, i.e. it coincides with the Riesz
isomorphism. The coefficient vector $\Lambda$ is obtained from the
augmented Lagrangian $E_0(u)-\Lambda^\top \bigl(
u(\mathbb{X})-\mathbb{Y}\bigr)$.
\end{prop}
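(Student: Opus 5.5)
The plan rests on two facts: $\mathcal{H}_K$ is a Hilbert space, and point evaluations $\delta_x$ are bounded linear functionals on it. The latter follows from the reproducing property, $u(x)=(u|K(\cdot-x))_{\mathcal{H}_K}$, which gives $|u(x)|\le \|u\|_{\mathcal{H}_K}\,K(0)^{1/2}$. Consequently $u\mapsto u(\mathbb{X})\in\mathbb{R}^{|\mathbb{X}|}$ is a continuous linear map $T$, and I will use it throughout.

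For $\alpha>0$, the functional $E_\alpha(u)=\frac12\|u\|^2_{\mathcal{H}_K}+\frac1{2\alpha}|Tu-\mathbb{Y}|^2$ is continuous, coercive ($E_\alpha(u)\ge\frac12\|u\|^2$), and strictly convex, since it is the sum of a strictly convex quadratic and a convex one. Take a minimizing sequence; it is bounded, so a subsequence converges weakly. $E_\alpha$ is weakly lower semicontinuous: the norm is, and $T$ has finite rank, so $Tu_n\to Tu$ strongly. Hence a minimizer exists, and strict convexity gives uniqueness. To derive the Euler--Lagrange equation I compute $\frac{d}{dt}E_\alpha(u+t\varphi)|_{t=0}=0$ for all $\varphi\in\mathcal{H}_K$, which yields
$$(u|\varphi)_{\mathcal{H}_K}=\frac1\alpha\sum_k [y^k-u(x^k)]\varphi(x^k)=\Big\langle \frac1\alpha\sum_k[y^k-u(x^k)]\delta_{x^k},\varphi\Big\rangle .$$
By the definition of $\mathcal{A}_K$ as the Riesz isomorphism, this is exactly \eqref{eulera} in $\mathcal{H}_K^*$.

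For $\alpha=0$, the admissible set $\{u: Tu=\mathbb{Y}\}$ is a closed affine subspace. It is nonempty because positive definiteness of $K(\mathbb{X},\mathbb{X})$ makes the interpolant \eqref{interpolant} a member. The minimizer of $\frac12\|u\|^2$ over it is then the unique element of minimal norm, namely the orthogonal projection of $0$ onto a closed convex set. The minimizer $u^0$ is orthogonal to $\ker T=\bigcap_k\ker\delta_{x^k}$. Since $\mathcal{A}_K u^0$ annihilates $\ker T$, a standard finite-dimensional linear algebra lemma places it in $\operatorname{span}\{\delta_{x^k}\}$, so $\mathcal{A}_Ku^0=\sum_k\lambda_k\delta_{x^k}$. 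This is precisely the stationarity condition of the Lagrangian $E_0(u)-\Lambda^\top(Tu-\mathbb{Y})$.

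The main obstacle is really just the care needed to justify that $\delta_x\in\mathcal{H}_K^*$ and that $\mathcal{H}_K$ is a genuine Hilbert space. That is, the completion must be realized as a space of functions on which evaluation is continuous, and I would take this from the standard RKHS theory cited. Everything else is routine Hilbert space convexity. As a consistency check, applying $\mathcal{A}_K^{-1}\delta_{x^k}=K(\cdot-x^k)$ shows that the minimizers have the representer form $K(\cdot,\mathbb{X})\Lambda$.
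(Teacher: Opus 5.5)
Your proposal is correct and follows essentially the paper's argument. Existence and uniqueness come from convexity; you also supply the coercivity, weak lower semicontinuity, nonemptiness of the constraint set, and projection-theorem details that the paper leaves implicit. The weak Euler--Lagrange equation for $\alpha>0$ comes from the first variation of $E_\alpha$, and for $\alpha=0$, variations in $N_{\mathbb{X}}=\ker T$ show that $\mathcal{A}_Ku^0_{\mathbb{X},\mathbb{Y}}$ annihilates $N_{\mathbb{X}}$ and is therefore a linear combination of the $\delta_{x^k}$.
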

\begin{proof}
  As the functionals are convex, existence and uniqueness readily
  follow (and is already known for $\alpha=0$ as discussed
  earlier). Taking variations in
  direction of functions $\varphi\in \mathcal{H}_K$ with
  $\varphi(\mathbb{X})=0$ shows that
  $\mathcal{A}_Ku^0_{\mathbb{X},\mathbb{Y}}\in N_\mathbb{X}^\perp$,
  where $N_\mathbb{X}=\big\{ \varphi\in \mathcal{H}_K\,\big |\,
  \varphi(\mathbb{X})=0\big\}$, which yields the representation
  formula that can also be obtained by taking variations with respect
  to the $u$ variable in the augmented Lagrangian to see that
  $$
  0=(u|\varphi)_{\mathcal{H}_K}-\Lambda^\top \varphi(\mathbb{X})=
  \langle \mathcal{A}_Ku-\Lambda^\top \delta_\mathbb{X},\varphi
  \rangle,\: \varphi\in \mathcal{H}_K.
  $$
  Taking variations of $E_\alpha$, $\alpha>0$, yields that
  $$
  0=(u|\varphi)_{\mathcal{H}_K}+\frac{1}{\alpha}\bigl(
  u(\mathbb{X})-\mathbb{Y}\bigr)\cdot\varphi(\mathbb{X})=
   \big\langle
   \mathcal{A}_Ku+\frac{1}{\alpha}(u(\mathbb{X})-\mathbb{Y})^\top
   \delta_\mathbb{X} ,\varphi\rangle,\: \varphi\in \mathcal{H}_K,
  $$
  which is the desired weak equation. 
\end{proof}

\begin{rem}\label{kernelAsFs}
  The identity
  $$
  \bigl( \mathcal{A}_K^{-1}\delta_x\bigr)(\tilde x)=\bigl(
  \mathcal{A}_K^{-1}\delta_x|k(\cdot,\tilde
  x)\bigr)_{\mathcal{H}_K}=\big \langle
  \mathcal{A}_K\mathcal{A}_K^{-1}\delta_x,k(\cdot,\tilde x)\rangle=k(x,\tilde x)
  $$
  identifies the kernel as a fundamental solution of the operator
  $\mathcal{A}_K$ leading to the Ansatz
  $$
  u=\Lambda^\top
  \mathcal{A}_K^{-1}\delta_\mathbb{X}=K(\cdot,\mathbb{X})\Lambda,
  $$
  and to the equation
  $\mathbb{Y}=u(\mathbb{X})=K(\mathbb{X},\mathbb{X})\Lambda$.
\end{rem}

\begin{rem}\label{fs}
The Laplace kernel $L(x)=e^{-|x|}$, $x\in \mathbb{R}^d$, is a fundamental solution of the
operator $c_d(1-4\pi^2\Delta)^{\frac{d+1}{2}}$ for
$c_d=\frac{\pi^{\frac{d+1}{2}}}{\Gamma(d+1)}$ and is therefore a
positive kernel (thanks to the characterization of positivity via
its Fourier transform, i.e. thanks to Bochner's Theorem, see
e.g. \cite{W04}) with native space given by 
$\operatorname{H}^{\frac{d+1}{2}}(\mathbb{R}^d)$. It is a kernel with
low regularity and is a special case of a kernel family known as
Mat\'ern kernels \cite{M86} that includes kernels of higher and higher regularity.
\end{rem}
\begin{proof}
This can be derived e.g. from the results about Bessel Potentials
found in \cite{AS61}.
\end{proof}
\begin{cor}
  The minimizer $u_{\mathbb{X},\mathbb{Y}}$ of \eqref{op} has the form
  $$
  u_{\mathbb{X},\mathbb{Y}}=K(\cdot,\mathbb{X})K(\mathbb{X},\mathbb{X})^{-1}\mathbb{Y}.
  $$
  The Ansatz
  $u^\alpha_{\mathbb{X},\mathbb{Y}}=K(\cdot,\mathbb{X})\Lambda$
  can be also used for the minimizer of \eqref{opa} and gives
  $$
  \Lambda=(\alpha+K(\mathbb{X},\mathbb{X})^{-1}\mathbb{Y}.
  $$
\end{cor}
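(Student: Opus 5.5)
The plan is to combine Proposition~\ref{basicExistence} with Remark~\ref{kernelAsFs}. We plug the Ansatz $u=K(\cdot,\mathbb{X})\Lambda$ into the Euler--Lagrange equations and identify the Lagrange multiplier, respectively the residual, with the coefficient vector. Uniqueness of the minimizer, from the same proposition, then shows that the function so obtained \emph{is} the minimizer. (The displayed formula is to be read as $\Lambda=(\alpha+K(\mathbb{X},\mathbb{X}))^{-1}\mathbb{Y}$, with $\alpha$ standing for $\alpha\,\mathrm{id}$.)

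For $\alpha=0$, Proposition~\ref{basicExistence} gives $\mathcal{A}_Ku^0_{\mathbb{X},\mathbb{Y}}=\Lambda^\top\delta_\mathbb{X}$ for some $\Lambda$. Since $\mathcal{A}_K$ is the Riesz isomorphism, it is invertible, so
$$
u^0_{\mathbb{X},\mathbb{Y}}=\Lambda^\top\mathcal{A}_K^{-1}\delta_\mathbb{X}=K(\cdot,\mathbb{X})\Lambda,
$$
where the second equality is the identity $\mathcal{A}_K^{-1}\delta_x=K(\cdot-x)$ of Remark~\ref{kernelAsFs}. This identity uses only the reproducing property $(v|K(\cdot-\tilde x))_{\mathcal{H}_K}=v(\tilde x)$ and the evenness of $K$, which follows from the symmetry of $K(\mathbb{X},\mathbb{X})$. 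Imposing the constraint $u(\mathbb{X})=\mathbb{Y}$ gives $K(\mathbb{X},\mathbb{X})\Lambda=\mathbb{Y}$. Since $K$ is positive definite, this yields $\Lambda=K(\mathbb{X},\mathbb{X})^{-1}\mathbb{Y}$ and hence the first formula.

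For $\alpha>0$, the weak equation \eqref{eulera} states that $\mathcal{A}_Ku^\alpha=\Lambda^\top\delta_\mathbb{X}$ with $\Lambda:=\frac1\alpha(\mathbb{Y}-u^\alpha(\mathbb{X}))$. Inverting $\mathcal{A}_K$ as before gives $u^\alpha=K(\cdot,\mathbb{X})\Lambda$, so the Ansatz is justified and not merely assumed. Evaluating at $\mathbb{X}$ gives $u^\alpha(\mathbb{X})=K(\mathbb{X},\mathbb{X})\Lambda$. Substituting this into the definition of $\Lambda$ yields $\alpha\Lambda=\mathbb{Y}-K(\mathbb{X},\mathbb{X})\Lambda$, that is, $(\alpha+K(\mathbb{X},\mathbb{X}))\Lambda=\mathbb{Y}$. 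The matrix $\alpha+K(\mathbb{X},\mathbb{X})$ is symmetric positive definite, hence invertible, which gives the claimed $\Lambda$.

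The main obstacle is the step $\mathcal{A}_K^{-1}\delta_x=K(\cdot-x)$. It needs $\delta_x\in\mathcal{H}_K^*$ and the reproducing property in the native-space inner product. Both follow directly from the definition of the norm on finite combinations of kernel translates, since $(K(\cdot-x)|K(\cdot-\tilde x))=K(x-\tilde x)$, extended by density. As a backup route, I would verify directly that the weak equations hold for the explicit candidate by testing against $\varphi\in\mathcal{H}_K$ via reproduction; uniqueness of the minimizer then concludes.
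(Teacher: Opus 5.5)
Your proposal is correct and follows essentially the paper's route: substitute the kernel expansion into the Euler--Lagrange equation of Proposition~\ref{basicExistence}, use that $K$ is the fundamental solution of $\mathcal{A}_K$ (Remark~\ref{kernelAsFs}), and read off $(\alpha+K(\mathbb{X},\mathbb{X}))\Lambda=\mathbb{Y}$, with your reading of the typo in the statement being the right one. The only difference is the direction of the argument. The paper posits the Ansatz and matches coefficients using linear independence of $\delta_{x^1},\dots,\delta_{x^{|\mathbb{X}|}}$, whereas you derive the Ansatz by inverting the Riesz map, with $\Lambda$ defined as the scaled residual; this avoids the independence argument and also treats $\alpha=0$ explicitly.
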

\begin{proof}
  Plugging the Ansatz into the equation satisfied by
  $u^\alpha_{\mathbb{X},\mathbb{Y}}$ gives
  $$
  \Lambda^\top \delta_\mathbb{X}
  +\frac{1}{\alpha}[K(\cdot,\mathbb{X})\Lambda]^\top
  \delta_\mathbb{X}= \Lambda^\top \delta_\mathbb{X}
  +\frac{1}{\alpha}[K(\mathbb{X},\mathbb{X})\Lambda]^\top
  \delta_\mathbb{X}= \frac{1}{\alpha}\mathbb{Y}^\top\delta_\mathbb{X},
  $$
  which, by linear independence of evaluations at distinct points
  implies that
  $$
  \alpha \Lambda+K(\mathbb{X},\mathbb{X})\Lambda =\mathbb{Y},
  $$
  as claimed.
\end{proof}

Notice that the above results show how, in this particular setup, it
is possible to reduce the identification of a solution to the infinite
dimensional problem to the solution of a finite matrix problem.
\begin{rem}\label{ins}
It was observed in \cite{G25} that the optimization problems \eqref{op}
and \eqref{opa} in the special case when $K=L$ have infinite
dimensional counterparts that allow for a similar ``reduction in
size''. Indeed, letting $\mathcal{M}\subset\mathbb{R}^d$ be an
orientable compact smooth submanifold, then one can consider
$$
\operatorname{argmin}_{u\in \mathcal{H}_K}
\frac{1}{2}\left\{ \|
  u\|_{\mathcal{H}_K}^2+\frac{1}{\alpha}\int_\mathcal{M}
  (u-f)^2\, d \sigma_\mathcal{M}\right\}
$$
for $\alpha\geq 0$ with the understanding that, when $\alpha=0$ the
functional loses its second term that is replaced by the constraint
$u\big |_\mathcal{M}=f$. Arguments parallel to the ones used above
lead to the reduced equations
$$
\alpha u(x) +\int_\mathcal{M} L(x-y)u(y)\, d \sigma_\mathcal{M}(y)=
f(x),\: x\in \mathcal{M},
$$
on the manifold $\mathcal{M}$ (which replaces the cloud $\mathbb{X}$) for
$f:\mathcal{M}\to \mathbb{R}$ (in place of $\mathbb{Y}$). This analogy
shows, on the one hand, that kernel interpolation amounts to solving a
Fredholm integral equation of the first kind (i.e. a rather ill-posed
problem), while approximate interpolation corresponds to solving a
better conditioned second kind Fredholm integral equation. Regression,
in this light, appears to be a natural regularization of exact interpolation.
\end{rem}
There is a natural connection between \eqref{opa} and \eqref{op}.
\begin{prop}
Let $u^\alpha_{\mathbb{X},\mathbb{Y}}$ be the minimizer of \eqref{opa} for
$\alpha>0$ and of \eqref{op} for $\alpha=0$. Denote the corresponding
minima by $e_\alpha$, $\alpha\geq 0$, and let $0<\alpha_0<\alpha_1$. Then it holds that
\begin{enumerate}[(i)]
\item $u_{\mathbb{X},\mathbb{Y}}^\alpha\to
  u^0_{\mathbb{X},\mathbb{Y}}$ as $\alpha\searrow 0$ in $\mathcal{H}_K$.
\item $0\leq e_{\alpha_1}\leq e_{\alpha_0}\leq \frac{1}{2}\|
  u^0_{\mathbb{X},\mathbb{Y}}\|_{\mathcal{H}_K}^2$ and
  $|u^{\alpha_0}_{\mathbb{X},\mathbb{Y}}-\mathbb{Y}|^2\leq
  |u^{\alpha_1}_{\mathbb{X},\mathbb{Y}}-\mathbb{Y}|^2$.
\item $\| u^{\alpha_1}_{\mathbb{X},\mathbb{Y}}\|_{\mathcal{H}_K}\leq \|
    u^{\alpha_0}_{\mathbb{X},\mathbb{Y}}\|_{\mathcal{H}_K}\leq
    \|u^0_{\mathbb{X},\mathbb{Y}}\|_{\mathcal{H}_K}$ 
\item $|u^{\alpha}_{\mathbb{X},\mathbb{Y}}-\mathbb{Y}|^2\leq \alpha
  [\|u^0_{\mathbb{X},\mathbb{Y}}\|_{\mathcal{H}_K}^2-\|u^\alpha_{\mathbb{X},\mathbb{Y}}\|_{\mathcal{H}_K}^2]$.
\end{enumerate}
\end{prop}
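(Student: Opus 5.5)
The plan is to use only the variational characterizations, comparing minimizers across parameters in the standard Tikhonov-regularization way. Write $u^\alpha=u^\alpha_{\mathbb{X},\mathbb{Y}}$, $N(u)=\frac12\|u\|^2_{\mathcal{H}_K}$ and $D(u)=\frac12|u(\mathbb{X})-\mathbb{Y}|^2$, so that $E_\alpha=N+\frac1\alpha D$ and $e_\alpha=E_\alpha(u^\alpha)$. Since $u^0$ is admissible for every $E_\alpha$ and satisfies $D(u^0)=0$, minimality gives
$$e_\alpha\le E_\alpha(u^0)=N(u^0).$$
This single inequality yields (iv) directly: $N(u^\alpha)+\frac1\alpha D(u^\alpha)\le N(u^0)$, and multiplying by $2\alpha$ gives the claimed bound. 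It also gives the last inequality in (iii), namely $\|u^\alpha\|\le\|u^0\|$, because $D\ge0$.

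For the monotonicity statements with $0<\alpha_0<\alpha_1$, the plan is to compare $E_{\alpha_1}$ and $E_{\alpha_0}$.
\begin{enumerate}[(1)]
\item Since $\frac1{\alpha_1}<\frac1{\alpha_0}$, we have $E_{\alpha_1}\le E_{\alpha_0}$ pointwise. Hence $e_{\alpha_1}\le E_{\alpha_1}(u^{\alpha_0})\le E_{\alpha_0}(u^{\alpha_0})=e_{\alpha_0}$, which together with $e_\alpha\le N(u^0)$ and $e_\alpha\ge0$ completes the chain in (ii).
\item For the residual and norm monotonicity, write the two minimality inequalities
$$N(u^{\alpha_0})+\tfrac1{\alpha_0}D(u^{\alpha_0})\le N(u^{\alpha_1})+\tfrac1{\alpha_0}D(u^{\alpha_1}),\qquad N(u^{\alpha_1})+\tfrac1{\alpha_1}D(u^{\alpha_1})\le N(u^{\alpha_0})+\tfrac1{\alpha_1}D(u^{\alpha_0}).$$
Adding them gives $\bigl(\frac1{\alpha_0}-\frac1{\alpha_1}\bigr)\bigl(D(u^{\alpha_0})-D(u^{\alpha_1})\bigr)\le0$, hence $D(u^{\alpha_0})\le D(u^{\alpha_1})$. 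This is the residual inequality in (ii), understood with $u(\mathbb{X})$ in place of $u$.
\item Inserting this back into the second inequality gives $N(u^{\alpha_1})-N(u^{\alpha_0})\le\frac1{\alpha_1}\bigl(D(u^{\alpha_0})-D(u^{\alpha_1})\bigr)\le0$, which is the first inequality in (iii).
\end{enumerate}

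For (i), the bound $\|u^\alpha\|\le\|u^0\|$ shows that $(u^\alpha)$ is bounded in $\mathcal{H}_K$. Take any sequence $\alpha_n\searrow0$ and extract a weakly convergent subsequence $u^{\alpha_n}\rightharpoonup v$. Point evaluations belong to $\mathcal{H}_K^*$, so $u^{\alpha_n}(\mathbb{X})\to v(\mathbb{X})$. By (iv), $|u^{\alpha_n}(\mathbb{X})-\mathbb{Y}|^2\le\alpha_n\|u^0\|^2\to0$, so $v(\mathbb{X})=\mathbb{Y}$ and $v$ is admissible for \eqref{op}. Weak lower semicontinuity of the norm gives $\|v\|\le\liminf\|u^{\alpha_n}\|\le\|u^0\|$, so $v=u^0$ by uniqueness from Proposition \ref{basicExistence}. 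Moreover $\limsup\|u^{\alpha_n}\|\le\|u^0\|=\|v\|$, so the norms converge. Weak convergence combined with convergence of norms in a Hilbert space gives strong convergence, and since every sequence has such a subsequence, the whole family converges.

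The main point requiring care is this last upgrade from weak to strong convergence. It rests entirely on the norm bound in (iii), so (iii) and (iv) should be established before (i). The alternative route, through the explicit formula $\Lambda=(\alpha+K(\mathbb{X},\mathbb{X}))^{-1}\mathbb{Y}$ of the Corollary, is even simpler: $u^\alpha=K(\cdot,\mathbb{X})\Lambda_\alpha$, and $\Lambda_\alpha\to K(\mathbb{X},\mathbb{X})^{-1}\mathbb{Y}$ in a finite-dimensional space. I would mention it as a check.
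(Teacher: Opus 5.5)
Your proof is correct and follows essentially the same route as the paper. You compare with the admissible competitor $u^0_{\mathbb{X},\mathbb{Y}}$ to get (iv) and boundedness, combine the two minimality inequalities for $\alpha_0,\alpha_1$ to get residual and norm monotonicity, and use weak compactness plus convergence of norms for (i); your explicit check that the weak limit satisfies $v(\mathbb{X})=\mathbb{Y}$ (via (iv) and continuity of point evaluations) supplies a step the paper leaves implicit.
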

\begin{proof}
  Notice that $e_\alpha\leq e_0$ for $\alpha>0$ since
  $u^0_{\mathbb{X},\mathbb{Y}}\in \mathcal{H}_K$
  and $E_\alpha(u^0_{\mathbb{X},\mathbb{Y}})=e_0$ where $E_\alpha$
  denotes the objective functional of \eqref{opa}. This entails that
  $\| u^\alpha_{\mathbb{X},\mathbb{Y}}\|_{\mathcal{H}_K}\leq C<\infty$ independently
  of $\alpha>0$.  In particular, we have that
  $$
  0\leq \frac{1}{2\alpha}|u^\alpha_{\mathbb{X},\mathbb{Y}}(\mathbb{X})-\mathbb{Y}|^2
  \leq e_\alpha-\frac{1}{2}\|u^\alpha_{\mathbb{X},\mathbb{Y}}\|_{\mathcal{H}_K}^2
  \leq e_0-\frac{1}{2}\|u^\alpha_{\mathbb{X},\mathbb{Y}}\|_{\mathcal{H}_K}^2=
  \frac{1}{2}\bigl[\|u^0_{\mathbb{X},\mathbb{Y}}\|_{\mathcal{H}_K}^2-
  \|u^\alpha_{\mathbb{X},\mathbb{Y}}\|_{\mathcal{H}_K}^2\bigr]
  $$
  Given any null sequence
  $(\alpha_k)_{k\in\mathbb{N}}$, weak compactness yields a weakly 
  convergent subsequence with limit $u^\infty_{\mathbb{X},\mathbb{Y}}\in
  \mathcal{H}_K$, which, by weak lower semicontinuity of the norm, must satisfy
  $\| u^\infty_{\mathbb{X},\mathbb{Y}}\|^2_{\mathcal{H}_K}\leq
  e_0$ and is therefore the unique minimizer
  $u^0_{\mathbb{X},\mathbb{Y}}$. Define now
  $d^2_{\mathbb{X},\mathbb{Y}}(u)=|u(\mathbb{X})-\mathbb{Y}|^2$
  and notice that
  \begin{align*}
    e_{\alpha_0}&=\frac{1}{2\alpha_0}d_{\mathbb{X},\mathbb{Y}}^2(
    u_{\mathbb{X},\mathbb{Y}}^{\alpha_0})+\|
                  u_{\mathbb{X},\mathbb{Y}}^{\alpha_0}\|^2_{\mathcal{H}_K} 
    \leq [\frac{1}{2\alpha_0}-\frac{1}{2\alpha_1}]
    d_{\mathbb{X},\mathbb{Y}}^2(u_{\mathbb{X},\mathbb{Y}}^{\alpha_1})+
    \| u_{\mathbb{X},\mathbb{Y}}^{\alpha_1}\|^2_{\mathcal{H}_K}+\frac{1}{2\alpha_1}
    d_{\mathbb{X},\mathbb{Y}}^2(u_{\mathbb{X},\mathbb{Y}}^{\alpha_1})\\
    &\leq
      [\frac{1}{2\alpha_0}-\frac{1}{2\alpha_1}]d_{\mathbb{X},\mathbb{Y}
     }^2(u_\mathcal{M}^{\alpha_1})+\| u_{\mathbb{X},\mathbb{Y}}^{\alpha_0}\|^2_{\mathcal{H}_K}
    +\frac{1}{2\alpha_1}d_{\mathbb{X},\mathbb{Y}}^2(u_{\mathbb{X},\mathbb{Y}
     }^{\alpha_0})
  \end{align*}
  yields that
  $d_{\mathbb{X},\mathbb{Y}}^2(u_{\mathbb{X},\mathbb{Y}}^{\alpha_0})\leq
  d_{\mathbb{X},\mathbb{Y}}^2(u_{\mathbb{X},\mathbb{Y}}^{\alpha_1})$. Since
  \begin{equation*}
    e_{\alpha_1}=\frac{1}{2\alpha_1}
    d_{\mathbb{X},\mathbb{Y}}^2(u_{\mathbb{X},\mathbb{Y}}^{\alpha_1})
    +\| u_{\mathbb{X},\mathbb{Y}}^{\alpha_1}\|^2_{\mathcal{H}_K}
    \leq\frac{1}{2\alpha_1}d^2_{\mathbb{X},\mathbb{Y}}(u)+
    \| u\|^2_{\mathcal{H}_K}\leq\frac{1}{2\alpha_0}d^2_{\mathbb{X},\mathbb{Y}}(u)
    +\| u\|^2_{\mathcal{H}_K},
  \end{equation*}
  taking the infimum over $u\in \mathcal{H}_K$
  gives $e_{\alpha_1}\leq e_{\alpha_0}$ as claimed. Next we have that
$$
\|u_{\mathbb{X},\mathbb{Y}}^{\alpha_1}\|^2_{\mathcal{H}_K}\leq
\|u_{\mathbb{X},\mathbb{Y}}^{\alpha_0}\|^2_{\mathcal{H}_K}-\frac{1}{2 \alpha_1}
\bigl[ d_{\mathbb{X},\mathbb{Y}}^2(u_{\mathbb{X},\mathbb{Y}}^{\alpha_1})
-d_{\mathbb{X},\mathbb{Y}}^2(u_{\mathbb{X},\mathbb{Y}}^{\alpha_0})\bigr] \leq
\|u_{\mathbb{X},\mathbb{Y}}^{\alpha_0}\|^2_{\mathcal{H}_K}
$$
thanks to the already established inequality for
$d_{\mathbb{X},\mathbb{Y}}^2(u_{\mathbb{X},\mathbb{Y}}^{\alpha})$. Finally,
it holds that (along the same subsequence of above for which we do not
introduce extra notation)
$$
\|u^0_{\mathbb{X},\mathbb{Y}}\|_{\mathcal{H}_K}=\|u^\infty_{\mathbb{X},\mathbb{Y}}\|_{\mathcal{H}_K}\leq
\liminf_{k\to\infty}\| u_{\mathbb{X},\mathbb{Y}}^{\alpha_k}\|_{\mathcal{H}_K}\leq
\limsup_{k\to\infty}\| u_{\mathbb{X},\mathbb{Y}}^{\alpha_k}\|_{\mathcal{H}_K}
\leq\|u^0_{\mathbb{X},\mathbb{Y}}\|_{\mathcal{H}_K}
$$
and all claims are established since convergence in a Hilbert space is
equivalent to weak convergence and convergence of the norm and we
proved that every sequence has a subsequence that converges to the
same limit.
\end{proof}
\begin{rem}
  While the above discussion revolved around the use of a positive
  definite kernel, it is useful and instructive to consider two
  extreme cases: the Laplace and the Gauss kernels. They represent a
  minimally smooth kernel (not even continuously differentiable), and
  a maximally smooth one (analytic). The latter one delivers higher order (and
  correspondingly worse conditioned) methods, while the first helps
  mitigating the intrinsic ill-posedness of exact interpolation. A
  slightly regularized version of the Laplace kernel will be used in
  numerical calculations for this very reason.
  We also notice that, in \cite{G25}, a derivation of Gauss kernel
  interpolation based on the use of the heat equation is proposed,
  which, at a purely formal level, amounts to replacing the Bessel potential norm in
  \eqref{op} and \eqref{opa} by the much  stronger norm
  $$
  \frac{1}{2}\| e^{-\Delta}u\|^2_{\operatorname{L}^2}= \frac{1}{2}\|
  e^{|\cdot|^2}\hat u\|^2_{\operatorname{L}^2}.
  $$
  While the use of arbitrary (positive definite) kernels is possible
  and useful, it sometimes obscures the exact nature of the
  differential operator that generates the norm on the reproducing
  kernel Hilbert space.
\end{rem}
It is Remark \ref{ins} that lead us to interpret problems
\eqref{op} and \eqref{opa} as representing interpolation and approximate
interpolation problems for functions $f$ defined on a manifold
$\mathcal{M}$, where $\mathcal{M}$ is only known through the
cloud/sample $\mathbb{X}$ and $f$ through the values $\mathbb{Y}$
which coincide with $f(\mathbb{X})$ or approximate it. In order to
access geometric information about $\mathcal{M}$ from $\mathbb{X}$, it
is then natural to use $u^\alpha_{\mathbb{X},\mathbb{1}}$, i.e. to choose
$f\equiv 1$, as a proxy for a defining function (or at least a local
level set function). In the rest of the paper, when performing
numerical experiments, we shall make use of the Gauss kernel as well
as the Laplace kernel, albeit in the regularized
form $L_\varepsilon(x)=e^{-\sqrt{|x|^2+\varepsilon}}$, $x\in
\mathbb{R}^n$, when needed. The former is protoypical of many a very
smooth kernel, while the second has minimal regularity and, even in
its regularized form $L_\varepsilon$, significantly alleviates the
ill-conditioning of the interpolation matrix as already mentioned
above. Kernels of any intermediate regularity between these extreme
cases are, e.g., those of the already mentioned Mat\'ern family.
\section{Hypersurfaces}
Given a point cloud or sample $\mathbb{X}$ of a compact smooth
hypersurface $\mathcal{M}$, we choose a kernel $K$, that is at least
twice continuously differentiable and compute the function
$$
u_\mathbb{X}=u_{\mathbb{X},\mathbb{1}}=
K(\cdot,\mathbb{X})K(\mathbb{X},\mathbb{X})^{-1}\mathbb{1},
$$
which amounts to a smooth extension of the constant function with
value 1 from $\mathbb{X}$ to the ambient space. As the kernel is
known, differential quantities associated to this function can be
computed analytically anywhere. Whenever $[u_\mathbb{X}=1]$
defines a hypersurface in the vicinity of one of its points, we can
define the implied normal vector to this surface by
$$
\nu_\mathbb{X} (x)= \frac{\nabla u_\mathbb{X}(x)}{|\nabla
  u_\mathbb{X}(x)|},
$$
where, again, the only numerical computation is that of the
coefficients $K(\mathbb{X},\mathbb{X})^{-1}\mathbb{1}$ since $\nabla
K(\cdot,\mathbb{X})$ can be computing exactly. Similarly one obtains
$$
D\nu_\mathbb{X}=\frac{1}{|\nabla u_\mathbb{X}|}\bigl( 
D^2u_\mathbb{X}-D^2u_\mathbb{X}\nu_\mathbb{X}\nu_\mathbb{X}^\mathsf{T}\bigr),
$$
i.e., a numerical approximation for the derivative 
$D\nu_\mathbb{X}$ of the normal in the spirit of meshfree methods. For
stability reasons and/or in the presence of noise, one can replace
$u_\mathbb{X}$ by
$u^\alpha_\mathbb{X}=u^\alpha_{\mathbb{X},\mathbb{1}}$ with a small
$\alpha>0$ or one related to the noise level, if known. We shall
revisit the choice of $\alpha>0$ for noisy data in the section where we
address the connection to Gaussian Process Regression. The quanties
$\nu_\mathbb{X}$ and $D\nu_\mathbb{X}$ will give us numerical access
to the tangent plane to the surface as well as to its principal
curvatures. This, in turn will open the door to the numerical
computation of geometric operators acting on functions defined on
$\mathbb{X}$. Notice that the deployment of
$u_{\mathbb{X},\mathbb{1}}$ replaces the use of the more commonly
encountered signed distance function (to the surface) which, however,
would require knowledge of the normal in order to be properly
interpolated. This is possible within the class of kernels chosen
here that are peaked at their center. The approach
also naturally offers a fix in the case of noisy data which consists
in making the estimation of geometric quantities robust by turning on
the regularization ($\alpha>0$). Estimates for the curvatures of a discrete
surface are of interest in many applications. We refer to \cite{Duclut22} for
the case of smooth (polygonal) data and to \cite{KaloEtAl07} for the noisy case
as well as to the references found in both for additional context.
\subsection{Interpolation Results}
It was observed in \cite{FW12} that the restriction of full space positive
definite kernels $K$ to a smooth compact embedded
submanifold $\mathcal{M}\subset \mathbb{R}^d$ without boundary of any
dimension is itself a 
positive kernel $K_\mathcal{M}:\mathcal{M}\times \mathcal{M}\to
\mathbb{R}$. They also identify the corresponding
native space of functions over $\mathcal{M}$ for kernels of known
finite smoothness and derive Sobolev 
error estimates for the interpolation problem for functions defined on
$\mathcal{M}$. Here we take a more direct approach to reach the same
conclusion. In essence, when $\mathbb{X}$ happens to be a sample of a
smooth manifold with the above properties, the full space interpolant
$u_{\mathbb{X},\mathbb{Y}}$ can be interpreted as a function in
$\mathcal{H}_K$, the trace
$\gamma_\mathcal{M}(u_{\mathbb{X},\mathbb{Y}})$ of which is an 
interpolant for $u:\mathcal{M}\to \mathbb{R}$ corresponding to the data
$(\mathbb{X},\mathbb{Y})$. Notice that, as long as $K$ has at least the
regularity of the Laplace kernel $L$, this apprach makes sense since
the corresponding native space is contained in
$\operatorname{H}^{\frac{d+1}{2}}(\mathbb{R}^d)\hookrightarrow
\operatorname{BUC}(\mathbb{R}^d)$. Thus full space
interpolation of data living on a manifold naturally leads to
approximations of functions belonging to the trace space of the native
space. This is in line with the results of \cite{FW12} and offers a slightly
different point of view. A difference between this paper and
\cite{FW12} is that we are also interested in using the data
$\mathbb{X}$ to interpolate the manifold itself in order to be able to
derive approximations for its geometric quantities and operators. This
is the main reason why we prefer to still think in terms of full space
kernel interpolation: defining functions of $\mathcal{M}$ do indeed
live in full space. We also observe that the minimally smooth case of the Laplace
kernel opens the door to effectively dealing with non-smooth manifolds as
the level sets of $\operatorname{H}^{\frac{d+1}{2}}(\mathbb{R}^d)$
functions do not enjoy much regularity. With the use of regularization
($\alpha>0$), the method can also be made robust even in the presence
of noise.
\begin{rem}
  We point out that, even in the minimally smooth case of the Laplace
  kernel, when working in the context of a low regularity submanifold
  $\mathcal{M}$, the level set function $u_{\mathbb{X},\mathbb{1}}$ is
  minimally smooth merely on $\mathcal{M}/\mathbb{X}$ as it holds
  that $\mathcal{A}_Ku_{\mathbb{X},\mathbb{1}}=0$ away from
  $\mathcal{M}/\mathbb{X}$, showing that $u_{\mathbb{X},\mathbb{Y}}\in
  \operatorname{dom}(\widetilde{\mathcal{A}}_K^n)$ for every $n\in
  \mathbb{N}$, where $\widetilde{\mathcal{A}}_K$ is the restriction of
  $\mathcal{A}_K$ to the open set
  $\mathcal{M}^\mathsf{c}/\mathbb{X}^\mathsf{c}$. Thus, in particular,
  the level sets other than the one with value 1 are typically
  smoother. This makes it possible to compute approximate values for
  geometric quantities even when the kernel lacks the necessary
  regularity.
\end{rem}
Let $u\in \operatorname{H}^m(\mathbb{R}^d)$ for $m>\frac{d}{2}$ be an
arbitrary function with $u=f$, where
$f\in\operatorname{H}^{m-1/2}(\mathcal{M})$ is given and $\mathcal{M}$ 
is assumed to be either a compact smooth hypersurface without boundary
in $\mathbb{R}^d$ or a finite set $\mathbb{X}\subset \mathcal{M}$. The
minimizer of an $\operatorname{H}^m(\mathbb{R}^d)$-norm functional
$E_m$ such as 
$$
E_m(u)=\frac{1}{2}\| u\|^2_{\operatorname{H}^{m}(\mathbb{R}^d)}
$$
with the constraint $u=f$ on $\mathcal{M}$ is a
relevant example in our context. When $\mathcal{M}$ is
a manifold, the minimizer is denoted by $u_{\mathcal{M},f}$ in line
with the notation used previously in the case when
$\mathcal{M}=\mathbb{X}$, $f=\mathbb{Y}$. Notice that, since $m>\frac{d}{2}$,
pointwise evaluations are possible and that the trace theorem ensures
that $\big\{ u\in \operatorname{H}^m(\mathbb{R}^d)\,\big |\, u=f\text{
  on }\mathcal{M} \big\}\neq\emptyset$.  We are
particularly interested in the case when $\mathbb{Y}=f(\mathbb{X})$,
which makes sense since the evaluation of $f$ is possible as
$m-\frac{1}{2}> \frac{d-1}{2}$ by assumption. The minimizers $u_{\mathcal{M},f}$ and
$u_{\mathbb{X},f(\mathbb{X})}=u_{\mathbb{X},\mathbb{Y}}$, with
$\mathbb{Y}=f(\mathbb{X})=\gamma_\mathcal{M}(u)(\mathbb{X})$, both
exist since the convex set determined by 
the constraint is non-empty and both belong to
$\operatorname{H}^m(\mathbb{R}^d)$. Thus it holds that 
$$
u_{\mathbb{X},\mathbb{Y}}\big |_\mathcal{M} \in
\operatorname{H}^{m-1/2}(\mathcal{M}) \text{ and }
u_{\mathbb{X},\mathbb{Y}}\big |_\mathcal{M}
(\mathbb{X})=f(\mathbb{X})
$$
It follows  that the function $u_{\mathbb{X},\mathbb{Y}}\big
|_\mathcal{M}-f$ vanishes on $\mathbb{X}\subset \mathcal{M}$. We can
therefore use results about the behavior of Sobolev functions with
scattered zeros as they can be found in \cite{BS94} for $\frac{d}{2}<m\in
\mathbb{N}$, or in \cite{W04} for $m\in (\frac{d}{2},\infty)\setminus
\mathbb{N}$.
While the results hold in the general context of $p\in[1,\infty)$, we
shall work with $p=2$ and $m>\frac{d}{p}=\frac{d}{2}$, when it holds
that
\begin{equation}\label{sobolevZeros}
|u|_{\operatorname{H}^k(\Omega)}\leq c\,
h_{\mathbb{X},\Omega}^{m-k}|u|_{\operatorname{H}^m(\Omega)}
\text{ and } \| u\|_\infty\leq c\, h_{\mathbb{X},\Omega}^{m-d/2}
|u|_{\operatorname{H}^m(\Omega)},
\end{equation}
for $u\in \operatorname{H}^m(\Omega)$ with $u(\mathbb{X})\equiv 0$ on
the assumption that $\Omega$ satisfy a geometric property which
always holds for balls and where $h_{\mathbb{X},\Omega}$ is the
so-called fill-distance of $\mathbb{X}$ given by
$$
h_{\mathbb{X},\Omega}=\sup_{x\in \Omega}\min_{\tilde x\in
  \mathbb{X}}|x-\tilde x|.
$$
It is a measure of how the set $\mathbb{X}$ approximates
$\Omega$. The norms appearing in the estimate are the semi-norms for the
Beppo Levi spaces $BL_k=\big\{ u\in \mathcal{D}'(\Omega)\,\big |\,
\partial^\alpha u\in \operatorname{L}_2(\Omega)\:\forall
|\alpha|=k\big\}$ given by
$$
|u|_{\operatorname{H}^k(\Omega)}^2=\sum_{|\alpha|=k}\|\partial^\alpha u\|_2^2.
$$
They are norms when $k>\frac{d}{2}$. The result is based on local
polynomial approximation. Transferring this result to the hypersurface
$\mathcal{M}$ yields the theorem below.
\begin{rem}
  Notice that that a more general result for Sobolev norms is
  formulated and proved in \cite{FW12} which allows $\mathcal{M}$ to
  be a submanifold of any dimension and the regularity to be
  fractional as well. Here we only provide a brief sketch of the proof
  specifically designed for our context for the sake of
  completeness. While formulated for hypersurfaces, it
  carries over verbatim to the case of submanifolds of any dimension.
\end{rem}
\begin{thm}
Let $\mathcal{M}\subset \mathbb{R}^d$ be a smooth compact
hypersurface and $f\in \operatorname{H}^{m-1/2}(\mathcal{M})$ be
given with $m>\frac{d}{2}$. Let $\mathbb{X}\subset \mathcal{M}$ and 
$$
h_{\mathbb{X},\mathcal{M}}=\sup_{x\in \mathcal{M}}\min_{\tilde x\in
  \mathbb{X}}d_\mathcal{M}(x,\tilde x)
$$
be the fill distance of the set $\mathbb{X}$ for the manifold
$\mathcal{M}$. If $f(\mathbb{X})\equiv0$, then it holds that
$$
|f|_{\operatorname{H}^k(\mathcal{M})}\leq c\,
h_{\mathbb{X},\mathcal{M}}^{m-1/2-k}|f|_{\operatorname{H}^{m-1/2}(\mathcal{M})}
\text{ and }\| f\|_\infty\leq c
h_{\mathbb{X},\mathcal{M}}^{m-d/2}|f|_{\operatorname{H}^{m-1/2}(\mathcal{M})}.
$$
\end{thm}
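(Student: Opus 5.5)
The plan is to localize to coordinate charts, transfer the flat-space estimate \eqref{sobolevZeros} to each chart (with dimension $d-1$ and smoothness $s=m-\tfrac12$), and then sum over the charts. Note that $s>\frac{d-1}{2}$ holds because $m>\frac d2$, so \eqref{sobolevZeros} applies in $\mathbb{R}^{d-1}$. With $n=d-1$ the pointwise bound becomes $h^{s-n/2}=h^{m-1/2-(d-1)/2}=h^{m-d/2}$, which is exactly the exponent claimed. The seminorm $|\cdot|_{\operatorname{H}^{m-1/2}(\mathcal{M})}$ will be read as a sum of chart seminorms of order $s$. For non-integer $s$ this is a Slobodeckij-type seminorm, as in the $m\notin\mathbb{N}$ version of \eqref{sobolevZeros} from \cite{W04}.

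\textbf{Step 1 (charts).} By compactness and smoothness of $\mathcal{M}$, fix a radius $r_0>0$ and finitely many points $p_1,\dots,p_N\in\mathcal{M}$ such that the following hold:
\begin{enumerate}[(a)]
\item the geodesic balls $B_\mathcal{M}(p_j,r_0)$ cover $\mathcal{M}$;
\item each doubled ball $B_\mathcal{M}(p_j,2r_0)$ is the image of a Euclidean ball $B_j\subset\mathbb{R}^{d-1}$ under a normal-coordinate (or graph) parametrization $\psi_j$;
\item each $\psi_j$ is bi-Lipschitz with constants uniform in $j$, and all derivatives of $\psi_j,\psi_j^{-1}$ up to order $\lceil m\rceil$ are uniformly bounded.
\end{enumerate}
Property (c) makes $\|g\circ\psi_j\|_{\operatorname{H}^k(B_j)}$ and $\|g\|_{\operatorname{H}^k(\psi_j(B_j))}$ equivalent, uniformly in $j$, for $0\le k\le s$.

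\textbf{Step 2 (fill distance transfers).} Set $\mathbb{X}_j=\psi_j^{-1}(\mathbb{X}\cap\psi_j(B_j))$. By the bi-Lipschitz property, $h_{\mathbb{X}_j,B_j}\le C\,h_{\mathbb{X},\mathcal{M}}$ as long as $h_{\mathbb{X},\mathcal{M}}\le r_0$. Balls satisfy the interior cone condition, so \eqref{sobolevZeros} applies to $g_j=f\circ\psi_j$ on $B_j$. We have $g_j(\mathbb{X}_j)=0$, and $g_j\in\operatorname{H}^s(B_j)$, where $s>\frac{d-1}{2}$ guarantees continuity so that the point values make sense.

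\textbf{Step 3 (apply and sum).} On each chart this gives
\[
|g_j|_{\operatorname{H}^k(B_j)}\le c\,h_{\mathbb{X},\mathcal{M}}^{s-k}|g_j|_{\operatorname{H}^s(B_j)},\qquad \|g_j\|_{\infty}\le c\,h_{\mathbb{X},\mathcal{M}}^{s-(d-1)/2}|g_j|_{\operatorname{H}^s(B_j)}.
\]
Pulling back to $\mathcal{M}$ and summing over $j$ (the overlap number $N$ is finite) yields both claims. The sup bound is immediate because the charts cover $\mathcal{M}$.

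\textbf{Main obstacle.} The delicate point is the seminorm bookkeeping under the change of variables. The chain rule turns the top-order derivatives of $f\circ\psi_j$ into top-order derivatives of $f$ plus lower-order terms weighted by derivatives of $\psi_j$. As a result, the chart seminorm $|g_j|_{\operatorname{H}^s}$ is controlled only by the full norm $\|f\|_{\operatorname{H}^s(\mathcal{M})}$, not by a pure top-order seminorm.

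The approach I would try first is to define $|f|_{\operatorname{H}^{m-1/2}(\mathcal{M})}$ as $\sum_j|f\circ\psi_j|_{\operatorname{H}^s(B_j)}$. This makes Step 3 exact, and the only loss is a constant depending on the atlas. If an intrinsic seminorm is required instead, I would absorb the lower-order terms using the zero-set estimate itself. For $h$ small, applying \eqref{sobolevZeros} with $k<s$ gives $|g_j|_{\operatorname{H}^k}\lesssim h^{s-k}|g_j|_{\operatorname{H}^s}$, so the lower-order contributions are $o(1)$ multiples of the top-order term and can be moved to the left-hand side. The case $h_{\mathbb{X},\mathcal{M}}>r_0$ is trivial after enlarging $c$, using the norm equivalence on a compact manifold.
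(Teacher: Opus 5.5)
Your route matches the paper's: finitely many coordinate balls, transfer of the fill distance to each ball, the flat estimate \eqref{sobolevZeros} in dimension $d-1$ with smoothness $s=m-\tfrac12$, and summation over the charts. The one structural difference is how you localize. The paper applies \eqref{sobolevZeros} to $(f\psi_j)\circ\varphi_j^{-1}$, with $\{\psi_j\}$ a partition of unity. You apply it to the unlocalized pullback $f\circ\psi_j$ on the whole chart ball. That choice is what makes your absorption step viable, because no cutoff derivatives enter the top-order chart seminorm. The paper's proof sums the full chart norms $\|(f\psi_j)\circ\varphi_j^{-1}\|_{\operatorname{H}^{m-1/2}}$, so in effect it only yields $\|f\|_{\operatorname{H}^{m-1/2}(\mathcal{M})}$ on the right, and it says nothing about the seminorm bookkeeping you raise.

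One step does not go through as justified. In Step 2, the bi-Lipschitz property alone does not give $h_{\mathbb{X}_j,B_j}\le C\,h_{\mathbb{X},\mathcal{M}}$. Write $h=h_{\mathbb{X},\mathcal{M}}$. For $z$ near $\partial B_j$, the sample point within geodesic distance $h$ of $\psi_j(z)$ may lie outside $\psi_j(B_j)$, so it need not belong to $\mathbb{X}_j$. Since \eqref{sobolevZeros} needs the fill distance over all of $B_j$, doubling the balls does not remove this problem.

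You need the radial argument the paper uses. Suppose $d_\mathcal{M}(p_j,\psi_j(z))\ge 2r_0-2h$.
\begin{enumerate}[(i)]
\item Pass to the point $\bar x$ on the radial geodesic from $p_j$ through $\psi_j(z)$ with $d_\mathcal{M}(p_j,\bar x)=2r_0-2h$.
\item Pick $\tilde x\in\mathbb{X}$ with $d_\mathcal{M}(\bar x,\tilde x)\le h$. Then $\tilde x\in\psi_j(B_j)$ and $d_\mathcal{M}(\psi_j(z),\tilde x)<3h$.
\item Only now does the bi-Lipschitz bound apply, giving $h_{\mathbb{X}_j,B_j}\le 3Lh$, where $L$ is the Lipschitz constant of $\psi_j^{-1}$.
\end{enumerate}

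Separately, disposing of the case $h>r_0$ by norm equivalence works only for the full-norm version. With a seminorm on the right you would need a Poincar\'e-type compactness argument that uses $\mathbb{X}\neq\emptyset$. Alternatively, simply restrict to $h\le h_0$, as the paper does.
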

\begin{proof}
As $\mathcal{M}$ is assumed to be compact, it is possible to find a
finite number of open chart domains $U_j\subset \mathcal{M}$,
$j=1,\dots, J$, and (bijective) charts (local coordinates)
$$
\varphi_j:U_j\to \mathbb{B}_{\mathbb{R}^{d-1}}(0,1),
$$
such that $\varphi_j^{-1}: \mathbb{B}_{\mathbb{R}^{d-1}}(0,1)\to
\mathbb{R}^d$ is smooth and $\varphi_j^{-1}\bigl(
\mathbb{B}_{\mathbb{R}^{d-1}}(0,1)\bigr)=\mathcal{M}\cap
U_j=U_j$. Choosing a smooth partition of unity $\big\{
\psi_j:\mathcal{M}\to\mathbb{R}\,\big |\, j=1,\dots, J\big\}$
subordinate to this cover by chart domains, it is possible to obtain
an (equivalent) norm on $\operatorname{H}^{m-1/2}(\mathcal{M})$ given
by
$$
\| v\|_{\operatorname{H}^{m-1/2}(\mathcal{M})}\sim \bigl(
\sum_{j=1}^J\|
(v\psi_j)\circ\varphi_j^{-1}\|^2_{\operatorname{H}^{m-1/2}(\mathbb{B}_{\mathbb{R}^{d-1}
   (0,1)})}\bigr) ^{1/2},
$$
where $v\in \operatorname{H}^{m-1/2}(\mathcal{M})$. Next define
$\mathbb{X}_j=\mathbb{X}\cap U_j$ and
$\mathbb{Z}_j=\varphi_j(\mathbb{X}_j)$ for $j=1,\dots, J$. It holds
that
$$
h_{\mathbb{Z}_j, \mathbb{B}_{\mathbb{R}^{d-1}(0,1)}}=\sup_{z\in \mathbb{B}_{\mathbb{R}^{d-1}(0,1)}}
\min _{\tilde z\in \mathbb{Z}_j}|z-\tilde z|=\sup_{x\in U_j}\min
_{\tilde x\in \mathbb{X}_j}|\varphi_j(x)-\varphi_j(\tilde x)|\leq c_j
h_{\mathbb{X}_j,U_j}\leq c\, h_{\mathbb{X},\mathcal{M}},
$$
where $h_{\mathbb{Z}_j, \mathbb{B}_{\mathbb{R}^{d-1}(0,1)}}$ is the corresponding
fill distance for $\mathbb{Z}_j$ in 
$\mathbb{B}_{\mathbb{R}^{d-1}(0,1)}$. The last inequality can be
obtained as follows: we can assume without loss of generality that the
sets $U_j$ are geodesic balls centered at $p_j$ and with radii $r_j$
as well as that $h_{\mathbb{X},\mathcal{M}}<\min _{j=1,\dots,J}r_j$. Then, if $x\in
\mathbb{B}_\mathcal{M}(p_j,r_j-h_{\mathbb{X},\mathcal{M}})$, we can
find $\tilde x \in \mathbb{X}$ such that $d(x,\tilde x)\leq
h_{\mathbb{X},\mathcal{M}}$, in which case $\tilde x\in \mathbb{X}_j$.
Else, if $r_j-h_{\mathbb{X},\mathcal{M}}\leq
d_\mathcal{M}(x,p_j)<r_j$, we can find a point $\bar x$ along the
geodesic connecting $p_j$ to $x$ with $d(p_j,\bar
x)=r_j-h_{\mathbb{X},\mathcal{M}}$ and $\tilde x\in \mathbb{X}_j$ with
$d(\bar x,\tilde x)\leq h_{\mathbb{X},\mathcal{M}}$, in which case we
have
$$
d_\mathcal{M}(x,\tilde x)\leq d_\mathcal{M}(x,\bar
x)+d_\mathcal{M}(\bar x,\tilde x)\leq
r_j-(r_j-h_{\mathbb{X},\mathcal{M}})+h_{\mathbb{X},\mathcal{M}}\leq
2h_{\mathbb{X},\mathcal{M}}.
$$
Notice that
$|\mathbb{X}_j|=|\mathbb{Z}_j|$ since $\varphi_j$ is bijective and
that
\begin{align*}
h_{\mathbb{X}_j,U_j}&=\sup_{x\in U_j}\inf_{\tilde x\in \mathbb{X}_j}
d_\mathcal{M}(x,\tilde x)=\sup_{z\in
  \mathbb{B}_{\mathbb{R}^{d-1}(0,1)}}\inf_{\tilde z\in \mathbb{Z}_j}
 d_\mathcal{M}\bigl( \varphi^{-1}(z),\varphi^{-1}(\tilde z)\bigr) 
\\ &\leq c \sup_{z\in
  \mathbb{B}_{\mathbb{R}^{d-1}(0,1)}}\inf_{\tilde z\in
  \mathbb{Z}_j}|z-\tilde z|=c\,h_{\mathbb{Z}_j, \mathbb{R}^{d-1}(0,1)}.
\end{align*}
An application of \eqref{sobolevZeros} for each $j=1,\dots,J$, yields
$$
\| (f\psi_j)\circ\varphi_j^{-1}\|_{\operatorname{H}^k(\mathbb{B}_{\mathbb{R}^{d-1}
   (0,1)})}\leq c_j h_{\mathbb{Z}_j, \operatorname{H}^k(\mathbb{B}_{\mathbb{R}^{d-1}
   (0,1)})}^{m-1/2-k}\|
(f\psi_j)\circ\varphi_j^{-1}\|_{\operatorname{H}^{m-1/2}(\mathbb{B}_{\mathbb{R}^{d-1} 
   (0,1)})}
$$
and
$$
\| (f\psi_j)\circ\varphi_j^{-1}\|_\infty\leq c_j h_{\mathbb{Z}_j,
  \operatorname{H}^k(\mathbb{B}_{\mathbb{R}^{d-1} 
   (0,1)})}^{m-1/2-(d-1)/2}\|
(f\psi_j)\circ\varphi_j^{-1}\|_{\operatorname{H}^{m-1/2}
  (\mathbb{B}_{\mathbb{R}^{d-1}(0,1)})},
$$
and the claim follows by combining these inequalities and using the
relation between the fill distance on the manifold to that in
coordinate balls.
\end{proof}
\begin{cor}
Define $u_{\mathbb{X},f(\mathbb{X})}$ to be the
minimizer of the $\operatorname{H}^m(\mathbb{R}^d)$-norm subject to
the constraint that
$u_{\mathbb{X},f(\mathbb{X})}(\mathbb{X})=f(\mathbb{X})$. Then the function
$$
u_{\mathbb{X},f(\mathbb{X})}\big |_\mathcal{M}-f\in
\operatorname{H}^{m-1/2}(\mathcal{M})
$$
vanishes on $\mathbb{X}$ and it holds that
$$
\| u_{\mathbb{X},f(\mathbb{X})}\big |_\mathcal{M}-f\|
_{\operatorname{H}^k(\mathcal{M})}\leq c
h_{\mathbb{X},\mathcal{M}}^{m-1/2-k}\Big |u_{\mathbb{X},f(\mathbb{X})}\big
|_\mathcal{M}-f\Big | _{\operatorname{H}^{m-1/2}(\mathcal{M})}.
$$
as well as
$$
\| u_{\mathbb{X},f(\mathbb{X})}\big |_\mathcal{M}-f\|_\infty\leq c
h_{\mathbb{X},\mathcal{M}}^{m-d/2}\Big |u_{\mathbb{X},f(\mathbb{X})}\big
|_\mathcal{M}-f\Big |_{\operatorname{H}^{m-1/2}(\mathcal{M})}.
$$
Furthermore, since $u_{\mathbb{X},f(\mathbb{X})}$ and
$u_{\mathcal{M},f}$ are $\operatorname{H}^m(\mathbb{R}^d)$-extensions
of $u_{\mathbb{X},f(\mathbb{X})}\big |_\mathcal{M}$ and $f$ with
continuous dependence by construction, it also holds that
$$
\| u_{\mathbb{X},f(\mathbb{X})}-u_{\mathcal{M},f}\|_{\operatorname{H}^{k+1/2}(\mathbb{R}^d)}
\leq c\, h_{\mathbb{X},\mathcal{M}}^{m-1/2-k}\|
u_{\mathbb{X},f(\mathbb{X})}\big |_\mathcal{M}-f\|_{\operatorname{H}^{m-1/2}(\mathcal{M})}.
$$
\end{cor}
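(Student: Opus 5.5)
The corollary splits into three claims, and I would take them in order. First, $u_{\mathbb{X},f(\mathbb{X})}|_\mathcal{M}-f$ lies in $\operatorname{H}^{m-1/2}(\mathcal{M})$ and vanishes on $\mathbb{X}$. Second, the two sampling estimates hold for it. Third, they transfer to an ambient bound.

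For the first claim, $m>\frac d2$ gives $u_{\mathbb{X},f(\mathbb{X})}\in\operatorname{H}^m(\mathbb{R}^d)$. The trace theorem for the smooth compact hypersurface $\mathcal{M}$ then gives $\gamma_\mathcal{M}u_{\mathbb{X},f(\mathbb{X})}\in \operatorname{H}^{m-1/2}(\mathcal{M})$, with norm bounded by $c\|u_{\mathbb{X},f(\mathbb{X})}\|_{\operatorname{H}^m}$. Since $f\in\operatorname{H}^{m-1/2}(\mathcal{M})$ by hypothesis, the difference $g:=u_{\mathbb{X},f(\mathbb{X})}|_\mathcal{M}-f$ lies in $\operatorname{H}^{m-1/2}(\mathcal{M})$. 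Because $m-\frac12>\frac{d-1}{2}$, this space embeds into $\operatorname{C}(\mathcal{M})$. Pointwise values on $\mathbb{X}$ are therefore meaningful, and $g(\mathbb{X})=f(\mathbb{X})-f(\mathbb{X})=0$ by the interpolation constraint.

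The second claim is then the preceding theorem applied to $g$. The $\operatorname{H}^k(\mathcal{M})$-seminorm and sup-norm estimates hold with right-hand side $|g|_{\operatorname{H}^{m-1/2}(\mathcal{M})}$. To upgrade the seminorm on the left to a full norm, I would apply the theorem for every order $0\le j\le k$ and sum. Since $h_{\mathbb{X},\mathcal{M}}\le \operatorname{diam}\mathcal{M}$, each factor $h^{m-1/2-j}$ is bounded by $c\,h^{m-1/2-k}$. If the theorem is read with the seminorm on the right, I would replace it by the norm, which is harmless.

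The third claim, the ambient estimate, is the real content. The plan is to use linearity of constrained $\operatorname{H}^m$-minimal extensions. The minimal-norm extension of traces on $\mathcal{M}$ is $E:=\gamma_\mathcal{M}^\dagger$, the Moore--Penrose right inverse of the surjective trace $\gamma_\mathcal{M}:\operatorname{H}^m(\mathbb{R}^d)\to\operatorname{H}^{m-1/2}(\mathcal{M})$, which is linear and bounded. I would first show $u_{\mathbb{X},f(\mathbb{X})}=E\bigl(u_{\mathbb{X},f(\mathbb{X})}|_\mathcal{M}\bigr)$. To see this, note that $v:=E(u_{\mathbb{X},f(\mathbb{X})}|_\mathcal{M})$ meets the point constraints on $\mathbb{X}\subset\mathcal{M}$ and satisfies $\|v\|\le\|u_{\mathbb{X},f(\mathbb{X})}\|$. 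Uniqueness of the minimizer from Proposition \ref{basicExistence} then forces $v=u_{\mathbb{X},f(\mathbb{X})}$. Likewise $u_{\mathcal{M},f}=Ef$, so $u_{\mathbb{X},f(\mathbb{X})}-u_{\mathcal{M},f}=Eg$.

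The remaining step is the main obstacle. It requires $E$ to be bounded at lower regularity, $\operatorname{H}^{k}(\mathcal{M})\to\operatorname{H}^{k+1/2}(\mathbb{R}^d)$, and not merely at the level $m-1/2$. I would get this by identifying $E$ explicitly as a single-layer type operator, $Eg=\mathcal{A}_K^{-1}(\mu\,\sigma_\mathcal{M})$, where the density $\mu$ solves the first-kind equation $\gamma_\mathcal{M}\mathcal{A}_K^{-1}(\mu\sigma_\mathcal{M})=g$. This is the analogue of Remark \ref{ins}. The Bessel potential $(1-\Delta)^{-m}$ composed with the trace is an elliptic pseudodifferential operator of order $-(2m-1)$ on $\mathcal{M}$. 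Its inverse maps $\operatorname{H}^{s}\to\operatorname{H}^{s-2m+1}$, and the layer potential then gains $2m-\frac12$ derivatives. Hence $E:\operatorname{H}^s(\mathcal{M})\to\operatorname{H}^{s+1/2}(\mathbb{R}^d)$ is bounded for all $s$ in a range including $[k,m-\frac12]$. If that route is too heavy, I would instead interpolate between the endpoint $s=m-\frac12$ and a lower one, or choose any fixed bounded right inverse of the trace; this preserves the estimate's form but not the exact identification with $u_{\mathcal{M},f}$.

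Granting this boundedness, $\|Eg\|_{\operatorname{H}^{k+1/2}(\mathbb{R}^d)}\le c\|g\|_{\operatorname{H}^k(\mathcal{M})}$. Combined with the second step, this gives the stated bound $c\,h^{m-1/2-k}\|g\|_{\operatorname{H}^{m-1/2}(\mathcal{M})}$.
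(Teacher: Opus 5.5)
Your proposal is correct and follows the paper's route. The difference $u_{\mathbb{X},f(\mathbb{X})}|_\mathcal{M}-f$ vanishes on $\mathbb{X}$, the preceding theorem gives the two sampling estimates, and the ambient bound comes from continuous dependence of the minimal extension. What you add is a justification of the step the paper only asserts as ``continuous dependence by construction'': $u_{\mathbb{X},f(\mathbb{X})}$ is itself the minimal-norm extension of its own trace, so the difference is $Eg$ for a single linear operator $E$, and $E$ is bounded $\operatorname{H}^k(\mathcal{M})\to\operatorname{H}^{k+1/2}(\mathbb{R}^d)$ at this lower regularity via the elliptic single-layer operator of order $-(2m-1)$. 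Your fallback of using an arbitrary right inverse of the trace would not give the stated estimate, but your main argument does not need it.
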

In kernel interpolation it is sometimes the case that one uses a
positive definite kernel $K:\Omega\times \Omega\to \mathbb{R}$,
$\Omega\subset \mathbb{R}^d$, without
an explicit characterization of the associated native space
$\mathcal{H}_K$. As we saw earlier,
$u_{\mathbb{X},\mathbb{Y}}=K(\cdot,x)K(\mathbb{X},\mathbb{X})^{-1}\mathbb{Y}$ 
is an interpolant for the values
$\mathbb{Y}\subset\mathbb{R}^{|\mathbb{X}|}$ at the arguments
$\mathbb{X}\subset \Omega$ and has minimal $\mathcal{H}_K$-norm among
all interpolants. Estimates for functions with scattered zeros like
the above in that context can then be derived as soon as one knows
that $\mathcal{H}_K \hookrightarrow \operatorname{H}^m(\Omega)$. Thus
the estimates obtained above can also be used for e.g Gauss kernel $G$
provided $f$ is a smooth enough function ensuring
solvability of the interpolation problem. This is due to the fact that
$\mathcal{H}_G \hookrightarrow \operatorname{H}^m(\mathbb{R}^d)$ for
every $m\in \mathbb{N}$. As the Gauss kernel is analytic, when the
manifold and the data are analytic, one would expect even better
convergence results along the lines of \cite[Section 11.4]{W04}.

If one is only interested in $\mathcal{M}$ and its geometry, then $f\equiv 1$
and the estimates are available on appropriate regularity assumptions
on $\mathcal{M}$ only. While the above theoretical results are clearly
valid for the Laplace kernel, its lack of smoothness does not allow for high
order approximations. Numerical experiments will show that high order
can be recovered by switching to the regularized kernel $L_\varepsilon$
mentioned before while, simultaneously, curbing the ill-posedness of the
system somewhat.

\begin{rem}
Kernel interpolation can deliver high order methods but, inevitably,
this comes at the cost of ill-posedness. The latter, in turn, used to
be thought of as a limitation to the size of point clouds that could
be considered. It has
since been observed that the use of compactly supported radial basis
functions \cite{Mo01,Ot03}, the use of the Multipole Method with
smooth kernel functions \cite{CBCMDMBE01,CBMFMM03}, or the use of the 
partition of unity method \cite{We02} can effectively mitigate this shortcoming.
\end{rem}

\begin{rem}
Kernel methods on special manifolds are also considered in the
literature (see for instance \cite{W04}) following an approach based
on kernels that live on the manifold itself and are related to its
geometry. As explicit kernels are only known for very special
manifolds, this leads to natural limitations.
\end{rem}
\subsection{Computation of Geometric Quantities}\label{geoqs}
Next we consider the problem of computing a numerical approximation to
the surface gradient $\nabla _\mathcal{M}f$ and the Laplace Beltrami
$\Delta _\mathcal{M}f$ operator of a function $f:\mathcal{M}\to
\mathbb{R}$ defined on a smooth compact hypersurface
$\mathcal{M}$. This is a problem of general interest due to its
manifold applications: surface fairing/smoothing/denoising, feature
detection, mesh optimization and simplification, geometric
compression, shape analysis and recognition, 3D reconstruction,
registration of surfaces, medical imaging, surface based morphology,
rendering, and PDEs on manifolds. It has been
widely studied in the literature in the context of various discrete
representations of surfaces (see for instance
\cite{Taubin95,MDSB03}). It was also considered for radial basis
functions representations \cite{Pi12}. As mentioned before, however, the
accepted approach is that of augmenting the data 
set $\mathbb{X}$ with two equidistant layers of additional points
$\mathbb{X}\pm \delta\nu_\mathcal{M}(\mathbb{X})$ located at a
distance $\delta>0$ from $\mathbb{X}$ in direction of the
positive and negative normals\footnote{Since \cite{Ho92}, the normals
  are typically approximated by Principal Component Analysis
  estimation of the tangent plane based on a ($k$-)nearest neighbors
  vicinity of each point.} and impose values at these points in an
effort to make the interpolant be an approximation of a ``signed distance
function'' for the manifold sampled by $\mathbb{X}$. The appearance of
disconnected components in the level sets of the interpolant along
with added stability are often adduced as the reasons justifying the
need of such an augmentation \cite{Pi12}. We believe (and will show
evidence in the last section) that disconnections are caused by the
use of very smooth (analytic) kernels, which, by design, try to
interpolate sparse, less smooth, or noisy hypersurfaces by highly
regular functions, which tend to use complex multi-component level
sets so as to accommodate the sample while maintaining regularity. We
will show that the use of low regularity (or close to such) kernels
and/or regularization in the sense advocated here (via regression) can
fix this problem. It should also be mentioned that some knowledge of
the normal to the surface is required in order to effectively
implement an augmentation strategy. Additionaly, exogenus parameters
are introduced that affect the approximation quality and the
conditioning of the resulting 
systems. These parameters also need to be determined 
somehow. While accuracy is lost (short of exact knowledge of the
normals), this approach reduces the 
computation of geometric operators to the computation of ambient space
operators applied to ambient space functions obtained by constant
extension in normal direction to the off-surface points (see
e.g. \cite{Pi12}). 

We assume again that $\mathcal{M}$ is known only through a sample
$\mathbb{X}\subset \mathcal{M}$ of its points and that the values of $f$ are only
available for $\mathbb{X}$, i.e. we assume $f(\mathbb{X})=\mathbb{Y}$
to be known. The starting point is the ability that we have gained of
obtaining geometric information about a hypersurface from a discrete
sample of its points via the associated signature function
$u_\mathbb{X}$. The algorithm is based on performing
computations on the interpolant $u_{\mathbb{X},\mathbb{Y}}$ that is
viewed as an approximation to the extension $u_{\mathcal{M},f}$ of $f$
to the whole space. We therefore need to derive a formula for the
surface operators in terms of the embedding $\mathcal{M}\subset
\mathbb{R}^d$ (or in terms of $\mathbb{X}$) that we can apply to the
extension and then consider
appropriate ``discrete'' counterparts. As we were not able to find a
such formula for the Laplace-Beltrami operator in 
the standard differential geometry textbooks, we give a brief
derivation here.
\begin{deff}
  Let $U,V:\mathcal{M}\to T \mathcal{M}$ be smooth sections of the
  tangent bundle (i.e. vector fields on $\mathcal{M}$). Then $D_V$
  denotes the derivative along the vector field $V$, i.e.
$$
D_Vf(x)=\left.\frac{d}{dt}\right|_{t=0} f \bigl(
\varphi_V(t,x)\bigr),\: D_VU(x)=\left.\frac{d}{dt}\right|_{t=0}
U\bigl( \varphi_V(t,x)\bigr),\: x\in \mathcal{M},
$$
where $\varphi_V(t,\cdot)$ is the flow generated by $V$, i.e. it
solves the ordinary differential equation
$$
\begin{cases}\dot \gamma =V(\gamma),&t>0,\\
  \gamma(0)=x&\end{cases}
$$
where $\gamma:[0,\infty)\to \mathcal{M}$ is a curve on
$\mathcal{M}$. This is well-defined since $V(x)\in T_x \mathcal{M}$
for every $x\in \mathcal{M}$. Even if $U$ is a tangential vector
field, it will in general not hold that $D_VU(x)\in T_x
\mathcal{M}$. This motivates the definition of the {\em covariant
derivative (connection)}
$$
\nabla_VU=\partial_V U=D_VU-\bigl(\nu\cdot D_VU\bigr)\nu,
$$
obtained by applying the orthogonal projection
$\mathbb{1}_d-\nu\nu^\top$  onto the tangent space $T\mathcal{M}$ of
$\mathcal{M}$ to the directional derivative $D_VU$.
\end{deff}
Now take $u_{\mathcal{M},f}:\mathbb{R}^d\to
\mathbb{R}$ or any other (smooth) extension of the function
$f:\mathcal{M}\to\mathbb{R}$ of interest. Then it holds that
$$
\nabla_\mathcal{M} u(y)=\nabla
u(y)-\bigl(\nu_\mathcal{M}(y)\cdot\nabla u(y)\bigr)\nu_\mathcal{M}(y),
$$
where $\nabla=\nabla_{\mathbb{R}^d}$ is the canonical gradient in the
ambient space. In order to compute the Laplace-Beltrami operator for
$\mathcal{M}$
$$
\Delta _\mathcal{M}=\operatorname{div}_\mathcal{M} \nabla_\mathcal{M}
$$
in terms of the coordinates of the ambient space $\mathbb{R}^d$,
we need the following lemma.
\begin{lem}
  For a smooth function $u:\mathbb{R}^d\to \mathbb{R}$ it holds that
$$
\Delta_\mathcal{M} u(x)=\operatorname{tr}(D^2u(x))-\nu(x)^\top
D^2u(x)\nu(x)+(d-1)H(x) \partial_{\nu(x)} u(x),\: x\in \mathcal{M}.
$$
\end{lem}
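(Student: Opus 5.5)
We compute $\Delta_\mathcal{M}u=\operatorname{div}_\mathcal{M}\nabla_\mathcal{M}u$ in ambient coordinates, using the tangential gradient formula $\nabla_\mathcal{M}u=P\nabla u$ with $P=\mathbb{1}_d-\nu\nu^\top$ recalled just before the statement. Here $\nu$ denotes any smooth extension of the unit normal to a neighborhood of $\mathcal{M}$, for instance $\nabla\phi/|\nabla\phi|$ for a defining function $\phi$. We use the extrinsic divergence formula $\operatorname{div}_\mathcal{M}V=\operatorname{tr}(P\,DV)$ for a vector field $V$ extended to a neighborhood. This formula follows from the definition of the covariant derivative above: in an orthonormal frame $\tau_1,\dots,\tau_{d-1}$ of $T_x\mathcal{M}$ we have $\operatorname{div}_\mathcal{M}V=\sum_i\tau_i\cdot\nabla_{\tau_i}V=\sum_i\tau_i\cdot D_{\tau_i}V$, since the normal correction is orthogonal to each $\tau_i$. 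This sum equals $\operatorname{tr}(P\,DV)$, and it is independent of the extension because only tangential derivatives enter.

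Apply this to $V=P\nabla u=\nabla u-(\nu\cdot\nabla u)\nu$. By the product rule,
$$DV=D^2u-\nu\,(\nabla u)^\top D\nu-\nu\,\nu^\top D^2u-(\nu\cdot\nabla u)\,D\nu.$$
Next take $\operatorname{tr}(P\,\cdot)$ of each term. Since $P\nu=0$, the two terms whose range lies in the span of $\nu$ drop out, leaving
$$\Delta_\mathcal{M}u=\operatorname{tr}(P D^2u)-(\partial_\nu u)\operatorname{tr}(P\,D\nu).$$
The first term is $\operatorname{tr}(D^2u)-\nu^\top D^2u\,\nu$, which gives the first two terms of the claim.

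It remains to identify $-\operatorname{tr}(P\,D\nu)$ with $(d-1)H$, where $H$ is the mean curvature. On $\mathcal{M}$, $\operatorname{tr}(P\,D\nu)=\operatorname{div}_\mathcal{M}\nu$, which is the trace of the shape operator, that is, the sum of the principal curvatures. If $\nu$ is chosen with $|\nu|=1$ near $\mathcal{M}$, then $\nu^\top D\nu=0$ and therefore $\operatorname{tr}(P D\nu)=\operatorname{div}\nu$. This is consistent with the formula for $D\nu_\mathbb{X}$ given earlier, which satisfies $D\nu\,\nu=0$.

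The main obstacle is the sign and normalization convention for $H$. The lemma as stated requires $(d-1)H=-\operatorname{div}_\mathcal{M}\nu$, so $H$ must be the averaged mean curvature with the convention $\kappa_i=-\tau_i\cdot D_{\tau_i}\nu$ (for the outward normal of a sphere, $H=-1/R$). If instead the convention $H=\frac{1}{d-1}\operatorname{div}\nu$ is intended, the curvature term carries the opposite sign. I will fix the convention explicitly at this step: $H=-\frac{1}{d-1}\operatorname{div}_\mathcal{M}\nu$, the mean of the principal curvatures. I will check it on the sphere, where $\Delta_\mathcal{M}$ of a radial function $u=g(|x|)$ must vanish. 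There $\operatorname{tr}D^2u-\nu^\top D^2u\,\nu=(d-1)g'/R$ and $\partial_\nu u=g'$, so the identity holds exactly when $H=-1/R$, confirming the convention.
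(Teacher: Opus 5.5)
Your derivation is correct, and it takes a different route from the paper. Write $\partial_j=\frac{\partial}{\partial x^j}$. The paper works in local coordinates: it uses the Gauss formula $\nabla_{\partial_j}\partial_k=\partial_j\partial_k+(\partial_k\cdot\partial_j\nu)\,\nu$, applies it to $u$, and traces in coordinates that are orthonormal at the base point, invoking $(d-1)H=\operatorname{tr}(D\nu)$ at the end. You instead stay extrinsic and coordinate-free, writing $\Delta_\mathcal{M}u=\operatorname{tr}\bigl(P\,D(P\nabla u)\bigr)$. There $P\nu=0$ discards the two rank-one terms at once, so the curvature contribution $-(\partial_\nu u)\operatorname{tr}(P\,D\nu)$ appears with its sign fully visible. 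That transparency is the main advantage of your route. The coordinate route stays closer to the intrinsic Levi-Civita definition. However, it has to distinguish the covariant Hessian $\partial_j(\partial_k u)-(\nabla_{\partial_j}\partial_k)u=D^2u(\partial_j,\partial_k)-(\partial_k\cdot\partial_j\nu)\,\partial_\nu u$ from the first-order term $(\nabla_{\partial_j}\partial_k)u$. The paper's step $\nabla_{\partial_j}\nabla_{\partial_k}u=\nabla_{\partial_j}\partial_k u=\partial_j\partial_k u+(\partial_j\cdot\partial_k\nu)\,\partial_\nu u$ effectively identifies these two, and that flips the sign of the curvature term.

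Your remark about the sign convention is therefore substantive. Under the normalization $(d-1)H=\operatorname{tr}(D\nu)$ fixed in the paper's proof (and reused for $H_\mathbb{X}$), the displayed identity fails. For $u=|x|^2$ on the unit circle, the right-hand side equals $4-2+2=4$, whereas $\Delta_\mathcal{M}u=0$. The correct identity is $\operatorname{tr}(D^2u)-\nu^\top D^2u\,\nu-(\operatorname{div}_\mathcal{M}\nu)\,\partial_\nu u$, which is exactly what you prove. It matches the stated formula only under your convention $H=-\frac{1}{d-1}\operatorname{div}_\mathcal{M}\nu$. One small inconsistency in your write-up: with $\kappa_i=-\tau_i\cdot D_{\tau_i}\nu$, the quantity $\operatorname{div}_\mathcal{M}\nu$ equals minus the sum of the principal curvatures, not the sum as you state a few lines earlier.
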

\begin{proof}
  If $X:\mathcal{U} \to \mathbb{R}^{d-1}$ is a local coordinate system
  for $\mathcal{M}$ about a point $y\in \mathcal{M}$, then the tangent
  space is spanned by the vectors
$$
\frac{\partial}{\partial x^j}= \bigl(\frac{\partial}{\partial
  x^j}X^{-1} \bigr)\bigl( X(y)\bigr),\: j=1,\dots, d-1,
$$
where it is typically assumed that $X(y)=0$ when working in a
neighborhood $\mathcal{U}$ of $y$. It follows that
$$
\nabla_{\frac{\partial}{\partial x^j}}\frac{\partial}{\partial
  x^k}=\frac{\partial}{\partial x^j}\frac{\partial}{\partial x^k}-
\bigl(\frac{\partial}{\partial x^j}\frac{\partial}{\partial x^k} \cdot
\nu\bigr) \nu,
$$
but $\frac{\partial}{\partial x^k}\cdot\nu=0$ and thus
$$
\frac{\partial}{\partial x^j}\frac{\partial}{\partial x^k}
\cdot\nu=-\frac{\partial}{\partial x^j}\nu\cdot
\frac{\partial}{\partial x^k}.
$$
It follows that
$$
\nabla_{\frac{\partial}{\partial x^j}}\frac{\partial}{\partial x^k}
=\frac{\partial}{\partial x^j}\frac{\partial}{\partial x^k}+
\frac{\partial}{\partial x^k}\cdot\frac{\partial}{\partial x^j}\nu
=\frac{\partial}{\partial x^j}\frac{\partial}{\partial x^k}+
\frac{\partial}{\partial x^j}\cdot\frac{\partial}{\partial x^k}\nu,
$$
where the last identity follows from
$\frac{\partial}{\partial x^j}\cdot\nu=0$ in view of the commutativity
of the regular derivatives and hence from
$$
\frac{\partial}{\partial x^k}\cdot\bigl( \frac{\partial}{\partial
  x^j}\nu\bigr)=-\frac{\partial}{\partial x^k}
\frac{\partial}{\partial x^j}\cdot\nu =\frac{\partial}{\partial x^j}
\cdot\bigl(\frac{\partial}{\partial x^k}\nu\bigr).
$$
Summarizing we obtain that
$$
\nabla_{\frac{\partial}{\partial x^j}}\nabla_{\frac{\partial}{\partial
    x^k}}u=\nabla_{\frac{\partial}{\partial
    x^j}}\frac{\partial}{\partial x^k}u=\frac{\partial}{\partial
  x^j}\frac{\partial}{\partial x^k}u+\bigl(\frac{\partial}{\partial
  x^j}\cdot \frac{\partial}{\partial x^k}\nu\bigr)\partial_\nu u.
$$
Now, take a system of local coordinates that yields an orthonormal
basis at $y$ for the tangent space. Observe that the addition of
$\nu(y)$ extends it to an orthonormal basis of $\mathbb{R}^d$, so
that, taking a trace in these coordinates at $y$, we arrive at
\begin{align}\notag
  \Delta_\mathcal{M} u=\nabla_{\frac{\partial}{\partial x^j}}\nabla_{\frac{\partial}{\partial
  x^j}}u&=\frac{\partial}{\partial x^j}\frac{\partial}{\partial
          x^j}u+\bigl(\frac{\partial}{\partial x^j}\cdot
          \frac{\partial}{\partial x^j}\nu\bigr)\partial_\nu u\\
        &=\operatorname{tr}(D^2u)-\nu^\top D^2u\nu+(d-1)H \partial_\nu u,
\end{align}
(where the summation convention was used) noticing that the mean
curvature $H$ satisfies
$$
(d-1)H=\operatorname{tr}(D\nu),
$$
since $\partial_\nu \nu=0$.
\end{proof}
In order to obtain a viable numerical algorithm, it remains only to
compute the interpolants $u_\mathbb{X}$ and
$u_{\mathbb{X},\mathbb{Y}}$ corresponding to the data set 
$(\mathbb{X},\mathbb{Y})$. The first is used in order to compute
the implied normal $\nu_\mathbb{X}$ and the implied mean curvature
$H_\mathbb{X}=\frac{1}{d-1}\operatorname{tr}(D\nu_\mathbb{X})$ as
described before, whereas the second yields a smooth extension of
$f\big |_\mathbb{X}$ to the full ambient space that is a numerical
approximation of the extension $u_{\mathcal{M},f}:\mathbb{R}^d\to
\mathbb{R}$ of $f$ as explained  in the previous subsection. Then, the
formul\ae{} for surface gradient and Laplace-Beltrami operators
immediately yield numerical discretizations given by
$$
\nabla_\mathbb{X} \mathbb{Y} =\nabla_\mathbb{X}
u_{\mathbb{X},\mathbb{Y}}=\nabla
u_{\mathbb{X},\mathbb{Y}}-\bigl(\nabla u_{\mathbb{X},\mathbb{Y}}\cdot
\nu_\mathbb{X}\bigr) \nu_\mathbb{X},
$$
and
$$
\Delta_\mathbb{X} \mathbb{Y} =\Delta _\mathbb{X}
u_{\mathbb{X},\mathbb{Y}}=\operatorname{tr}(D^2
u_{\mathbb{X},\mathbb{Y}})-\nu_\mathbb{X}^\top
D^2u_{\mathbb{X},\mathbb{Y}}\nu_\mathbb{X}+(d-1)H_\mathbb{X} \nabla
u_{\mathbb{X},\mathbb{Y}}\cdot \nu_\mathbb{X},
$$
where all derivatives involved are computed analytically. These
expressions can be evaluated anywhere as they are combinations of
functions defined everywhere. Matrix discretizations are obtained by
evaluating the expressions $\nabla_\mathbb{X}\mathbb{Y}$ and
$\Delta_\mathbb{X}\mathbb{Y}$ at $\mathbb{X}$. It is also possible to
obtain non square representations evaluating at other discrete sets
$\widetilde{\mathbb{X}}$ and/or to replace the ``grid'' $\mathbb{X}$ by
substituting $\mathbb{Y}$ with
$u_{\mathbb{X},\mathbb{Y}}(\widetilde{\mathbb{X}})$ to arrive at
matrices that approximate the operators and take inputs defined on
$\widetilde{\mathbb{X}}$ and produce outputs defined on that same
grid. 
\section{Connection to Regression}
Before moving on to numerical experiments, we address the connection
to Gaussian Process Regression which yields an interpretation of the
regularization parameter $\alpha>0$.

\begin{deff}
  If $E=E(\Omega)\subset \operatorname{C}(\Omega)$ is a Banach space
  of functions, a random vector $W\in E$ is said to have Gaussian law
  if $e^*W=\langle e^*,W \rangle_{E^*,E}$ is normally distributed for
  each $e^*\in E^*$. It is called centered iff $\mathbb{E}[e^*W]=0$
  for each $e^*\in E^*$.
\end{deff}
Multivariate Gaussian random variables on $\mathbb{R}^d$ are the
finite dimensional counterpart. The
associated covariance function is given by
$$
K(x,\tilde x)=\mathbb{E}[W(x)W(\tilde x)],\: x,\tilde x\in E,
$$
and uniquely determines the law of the centered Gaussian random
element. It is naturally non-negative definite as
$$
0\leq \mathbb{E}\bigl([\sum_{i=1}^m
a_iW(x_i)]^2\bigr)=\sum_{i,j=1}^ma_ia_j
\mathbb{E}\big[W(x_i)W(x_j)\big]=
\sum_{i,j=1}^na_ia_jK(x_i,x_j)=a^\top K(\mathbb{X},\mathbb{X})a,
$$
for any $\mathbb{X}\subset \Omega$. Thus one can start with a positive
definite kernel $K:\mathbb{R}^d\times \mathbb{R}^d\to \mathbb{R}$ and 
obtain an associated Gaussian random vector $W\in
\operatorname{C}(\Omega)$. When confronted with a data set
$(\mathbb{X},\mathbb{Y})$ with $\mathbb{X}\subset \Omega$ one can make
the assumption that
$$
y=f(x)+\varepsilon
$$
where $f\sim N(0,K)$, i.e. one can use a prior for $f$ in the form of
a random function with vanishing mean and covariance function $K$ and
model the presence of noise by the additive white noise term
$\varepsilon$ (independent of $f$) of variance $\sigma^2>0$. Given a
value set $\mathbb{Y}$ for a set of arguments $\mathbb{X}\in \Omega$,
one can then try to compute the conditional value $Y$ at a new
argument $x\in \Omega$ given the observed data
$$
y\big | \bigl(x,f(\mathbb{X})=\mathbb{Y}\bigr).
$$
It is known, using standard properties of multivariate Gaussian
distributions under conditioning and sum, that, on the above
assumptions, the random variable $Y$ is also Gaussian and that its
expected value can be computed as
$$
\mathbb{E}[Y]=K(x,\mathbb{X})\bigl[
\sigma^2+K(\mathbb{X},\mathbb{X})\bigr]^{-1}\mathbb{Y},
$$
which is the exact kernel interpolation formula when $\sigma=0$ and
coincides with the approximate interpolation formula when one chooses
$\alpha=\sigma^2>0$.
Thus one can think of the amount of regularization needed to be given
by the amount of uncertainty in the data, if it is known. This also
shows that the choice of prior in Gaussian Process Regression,
i.e. the choice of correlation function, determines the regularity
properties of the approximate interpolant (i.e. the expectation of the
posterior).
\begin{rem}
An introduction to the statistical point of view of Gaussian Process
Regression is a topic covered e.g. in the online book \cite{RW06} in the
context of Machine Learning. For the above problem it is also
possible to determine the variance of $Y$ which is given by
$$
\mathbb{E}\bigl[ \big |Y-\mathbb{E}[Y]\big |^2\bigr] =
K(x,x)+\sigma^2-K(x,\mathbb{X})[\sigma^2
+K(\mathbb{X},\mathbb{X})]^{-1}K(\mathbb{X},x).
$$
\end{rem}
We refer to \cite{RW06} for the material necessary to fill the gaps in
the arguments of this section.
\section{Numerical Experiments}
\begin{rem}
While the signature function is almost always a defining function, at
least locally, there are circumstances in which it is
not. Interestingly, this is the case when the curvature vanishes in
the case of curves, and when the mean curvature vanishes for
surfaces. These are all situations when the local symmetries of the
manifold make the normal vector ``vanish analytically'' (as explained
in the example below) and almost vanish
numerically. In the next subsection we show that this is only the case
in the absence of global information and does therefore not preclude
the use of the method in the presence of some flatness.
\end{rem}
\subsection{Curves}
In this section we illustrate how the method performs on closed curves. In all
figures, unless otherwise stated, we use white dots to indicate the data
set $\mathbb{X}$, a red line to indicate the level 1 of the signature
function $u_\mathbb{X}$, shades of green for its level regions, and
black arrows for the implied normals $\nu_\mathbb{X}$. We begin by
finishing the above discussion. We take a closed smooth curve
$\Gamma\subset \mathbb{R}^2$ parametrized by
$$
\bigl( \sin(t),\sin^3(t)+.5\cos(t)\bigr) ,\: t\in[0,2\pi),
$$
which has two inflection points.
\begin{figure}[ht]
    \centering
    \includegraphics[width=0.45\textwidth]{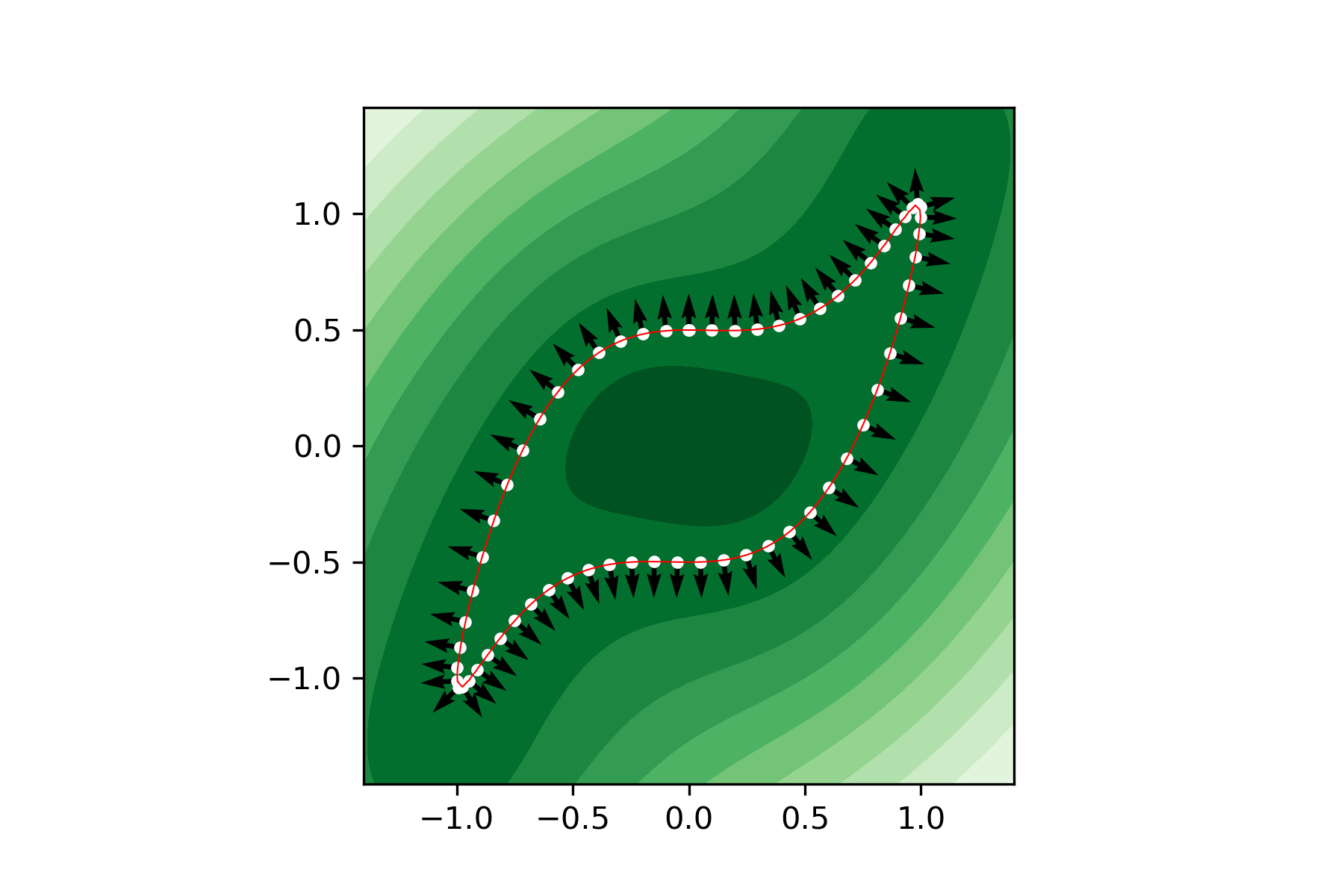}
    \includegraphics[width=0.45\textwidth]{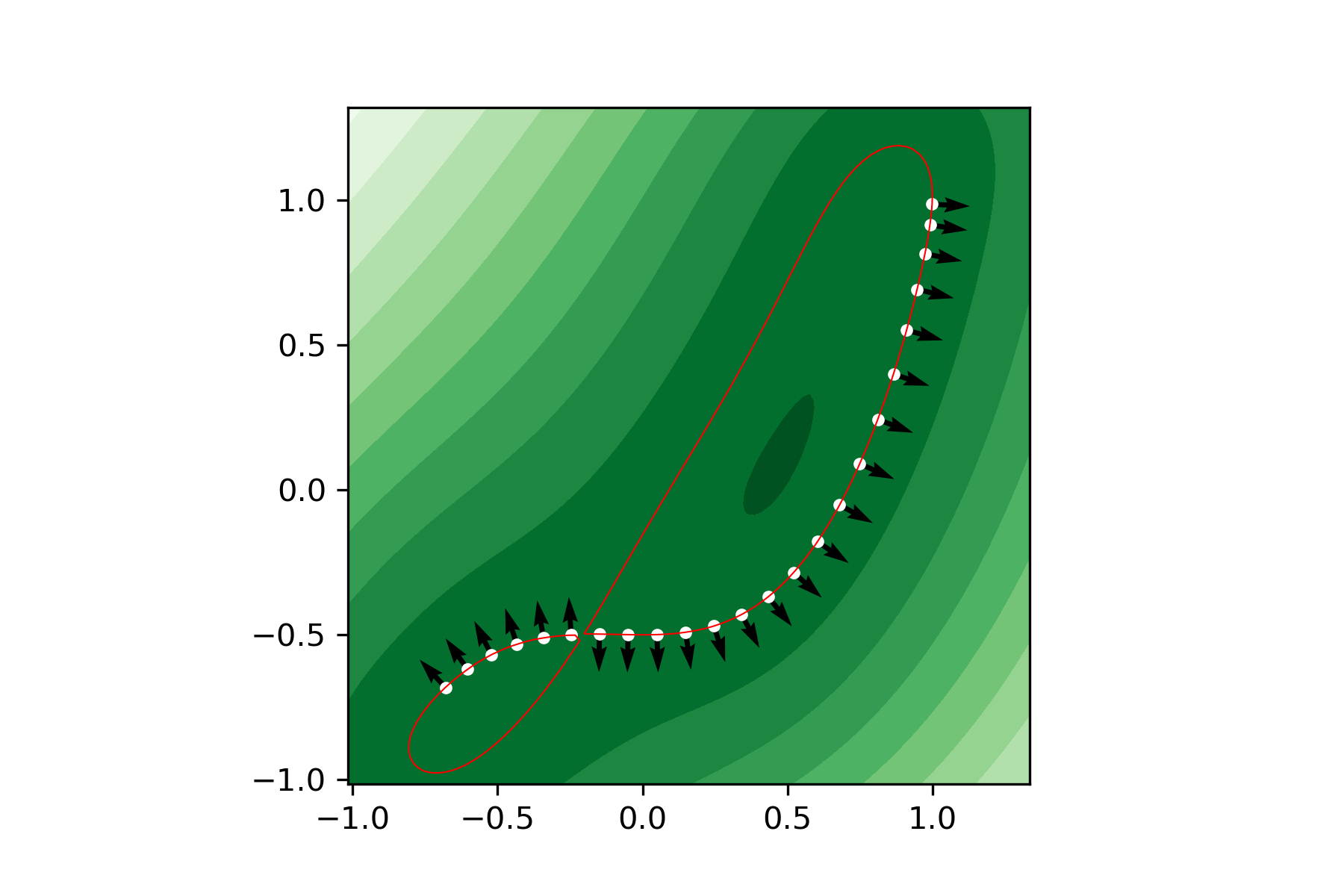}
    \caption{Reconstruction of a closed curve from 64 of its points
      (in white) on the left. Reconstruction based on an incomplete
      subsample consisting of 24 points on the right.}
    \label{F:cubicClosed}
\end{figure}
Figure \ref{F:cubicClosed} (left) shows the information recovered from
$u_\mathbb{X}$ where $\mathbb{X}$ consists 
of 64 points at equidistant parameter values. As you can see the
normals all point in an outward direction orthogonal to the
curve. However, if we remove a part of the closed curve, it obviously
becomes impossible to define regions interior and exterior to the
curve and this is reflected in the vanishing of the gradient of
$u_\mathbb{X}$ at the one inflection point found on this portion of
the curve as can be inferred from Figure \ref{F:cubicClosed}
(right). As it is to be expected from the contruction of
$u_\mathbb{X}$, the induced normal, when computed
based on local information alone, points in the direction of the
region from which the curve looks (locally) convex (and thus
effectively flips its sense as it goes through the inflection
point as if using a discontinuous parametrization). In this example we 
used interpolation with the Laplace kernel and $\varepsilon=1$. Notice
as well that the use of a smooth kernel still yields a smooth closed
curve that is necessarily the level set of an analytic function.

Next we consider a non-smooth curve, a triangle. This examples shows
that, while the Gaussian kernel can be used also for samples of
(simple) non-smooth curves, it will (as in the previous example)
generate complicated level lines so as to decompose the curve into
smooth pieces. The use of the Laplace kernel avoids this issue 
since the signature function on the set or its sample is only H\"older
and can therefore easily accommodate abrupt turns in the curve. Even
in more complicated non-smooth examples, Gauss interpolation yields
complicated and mostly useless level sets but, at the same time, still
delivers useful normals on the sample set as in the example of the
triangle. Figure \ref{F:triangle} shows the results for the Gauss
(left) and Laplace (right) kernels, where 48 sample points are used
and $\varepsilon=10^{-5}$.
\begin{figure}[ht]
    \centering
    \includegraphics[width=0.45\textwidth]{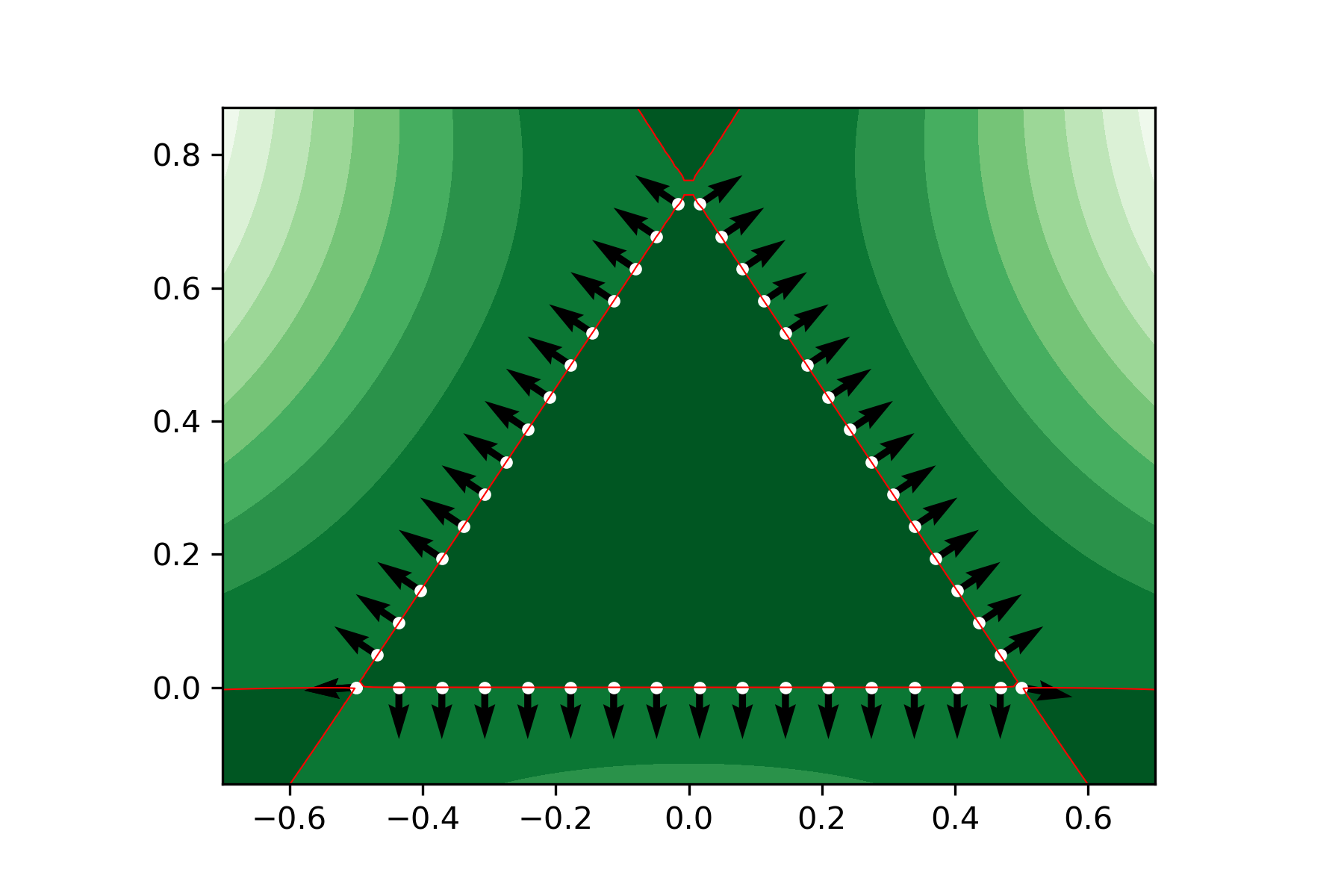}
    \includegraphics[width=0.45\textwidth]{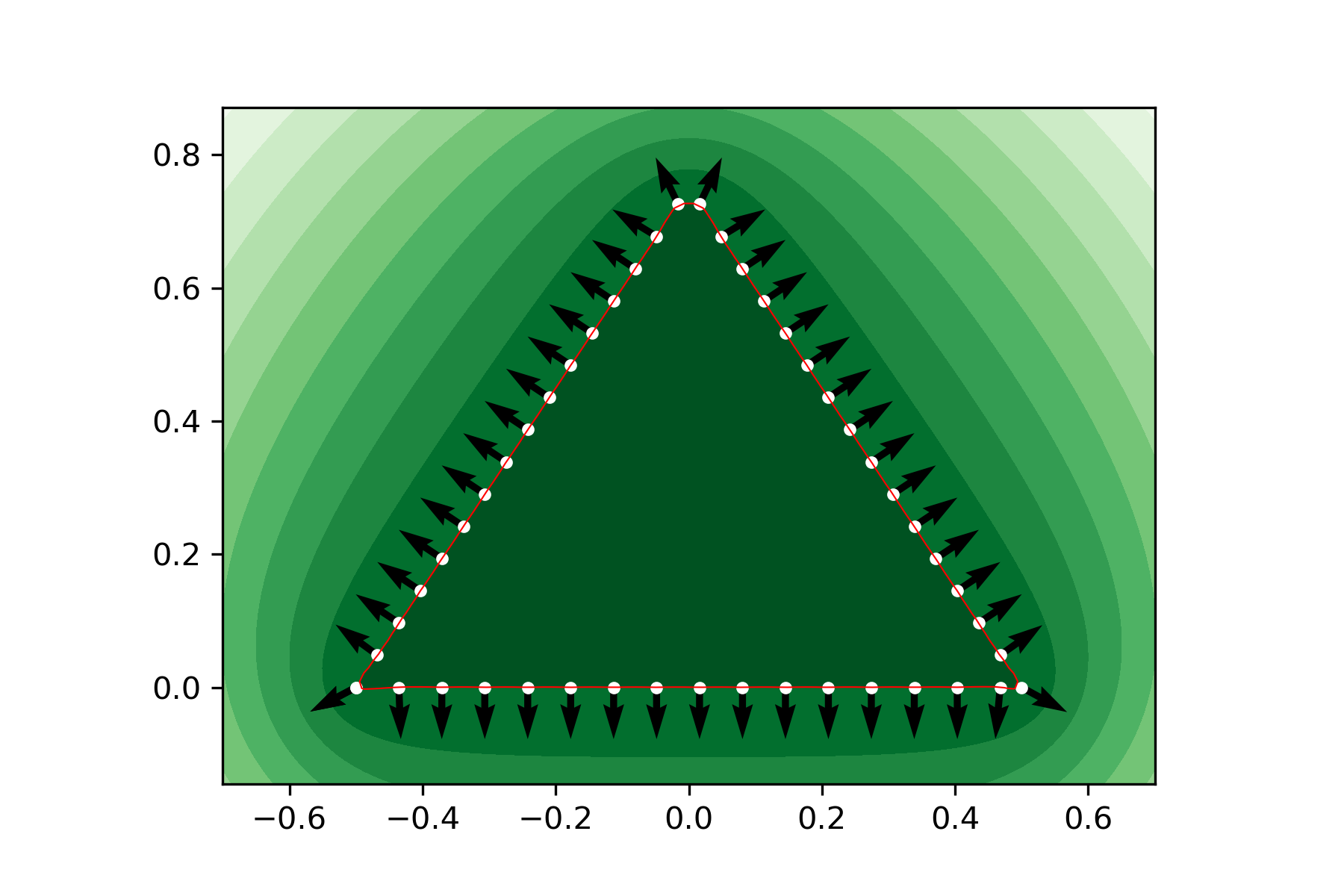}
    \caption{Reconstruction of a triangle from 48 sample points using
      Gauss and Laplace kernel interpolation, left and right,
      respectively.}
    \label{F:triangle}
\end{figure}
We illustrate the high accuracy of analytic kernel interpolation and
the associated artifacts in two examples of curves. In the first, see
Figure \ref{F:semiEllipse}, we
compute the implied level set of the signature function of a sample
consisting of points situated on one half of an ellipse, in the second (Figure
\ref{F:square}) that of a
full square and the same square with a missing corner. The first shows
the ability of the method to perform analytic continuation. The second
shows how the level sets of an analytic function attempt to accomodate
the sample of a non-smooth curve.
\begin{figure}[ht]
    \centering
    \includegraphics[width=0.45\textwidth]{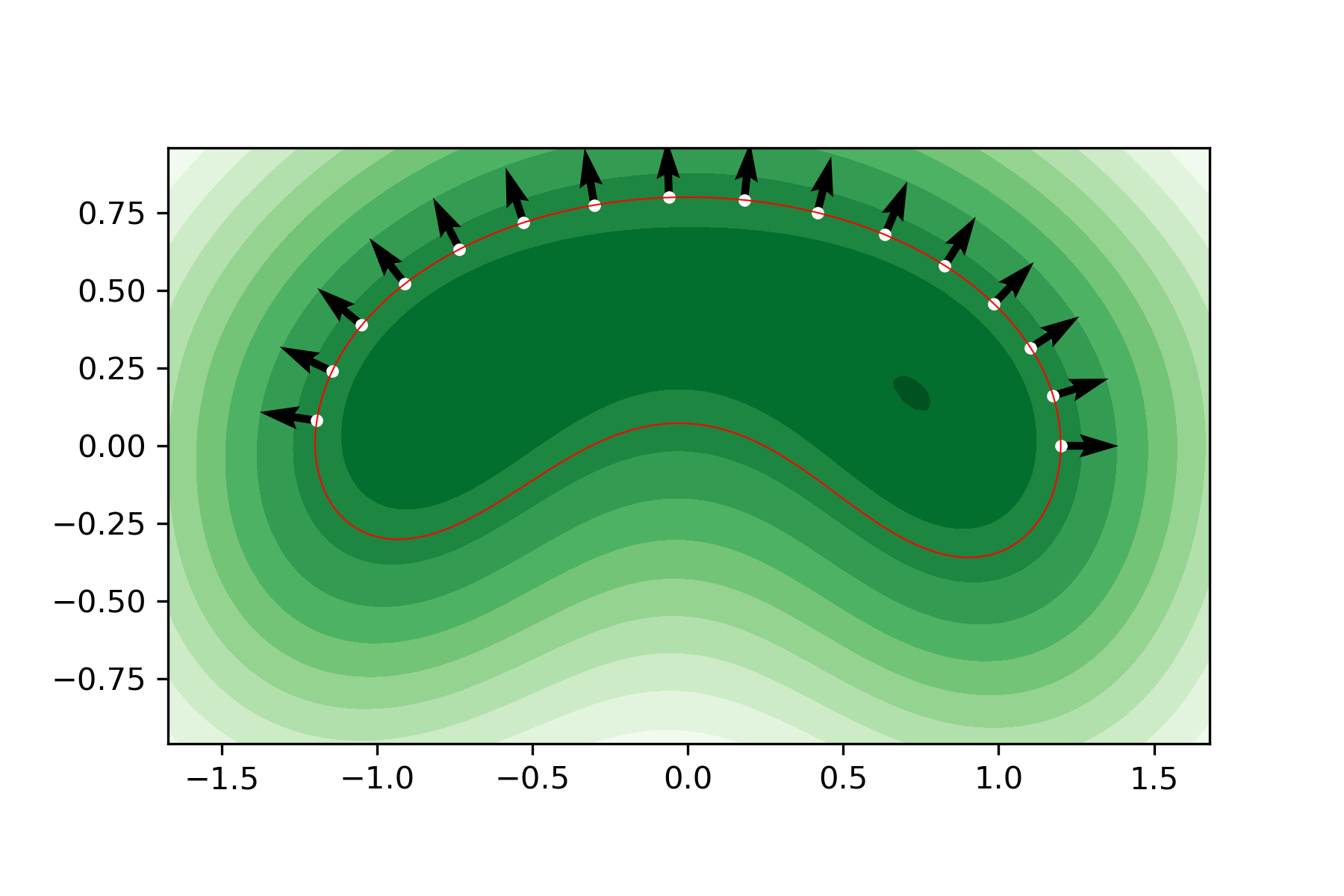}
    \includegraphics[width=0.45\textwidth]{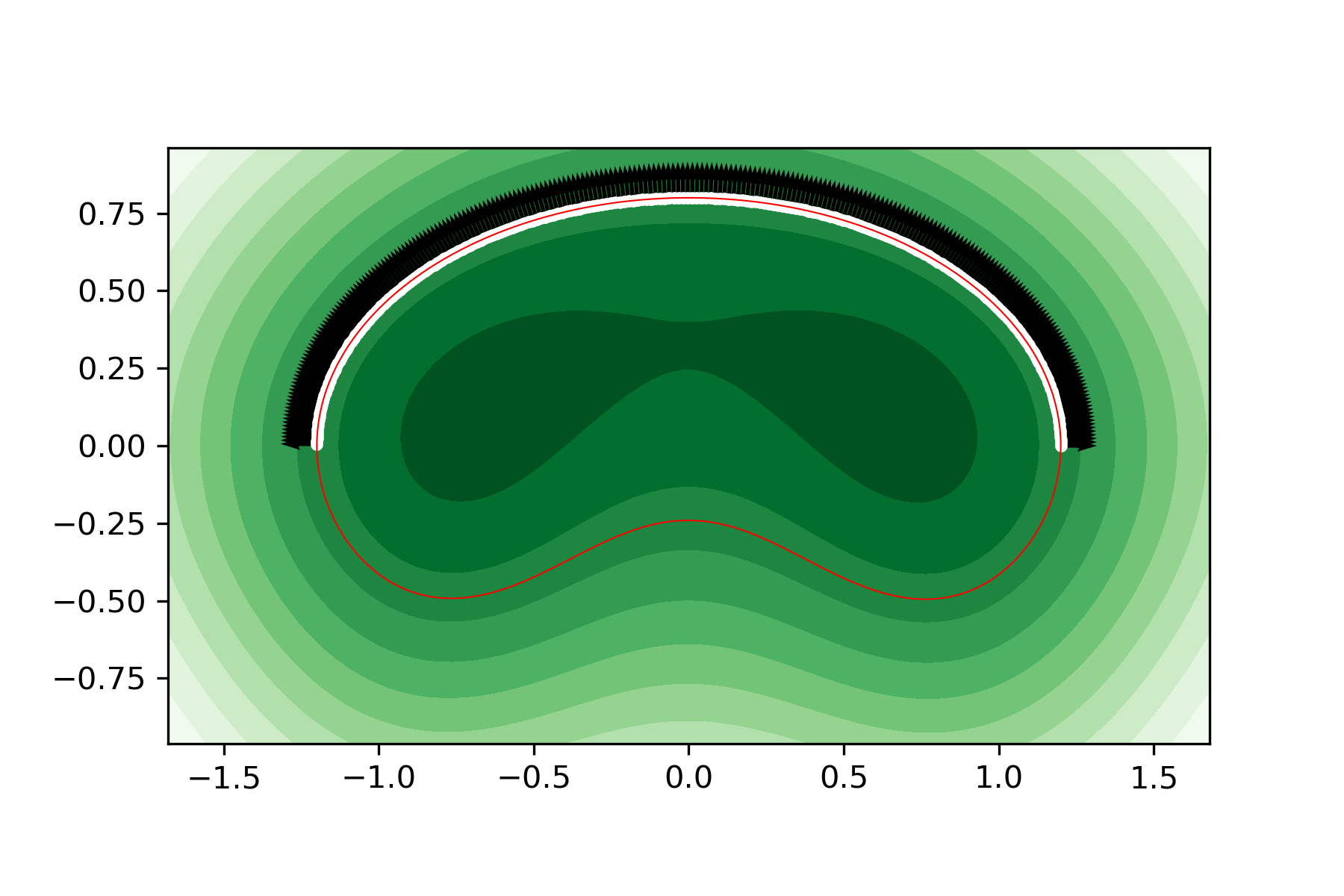}
    \caption{Implied curve of an exact sample consisiting of $m$
      points along half of a ellipse. Left: $m=16$. Right: $m=256$.}
    \label{F:semiEllipse}
  \end{figure}
  \begin{figure}[ht]
    \centering
    \includegraphics[width=0.45\textwidth]{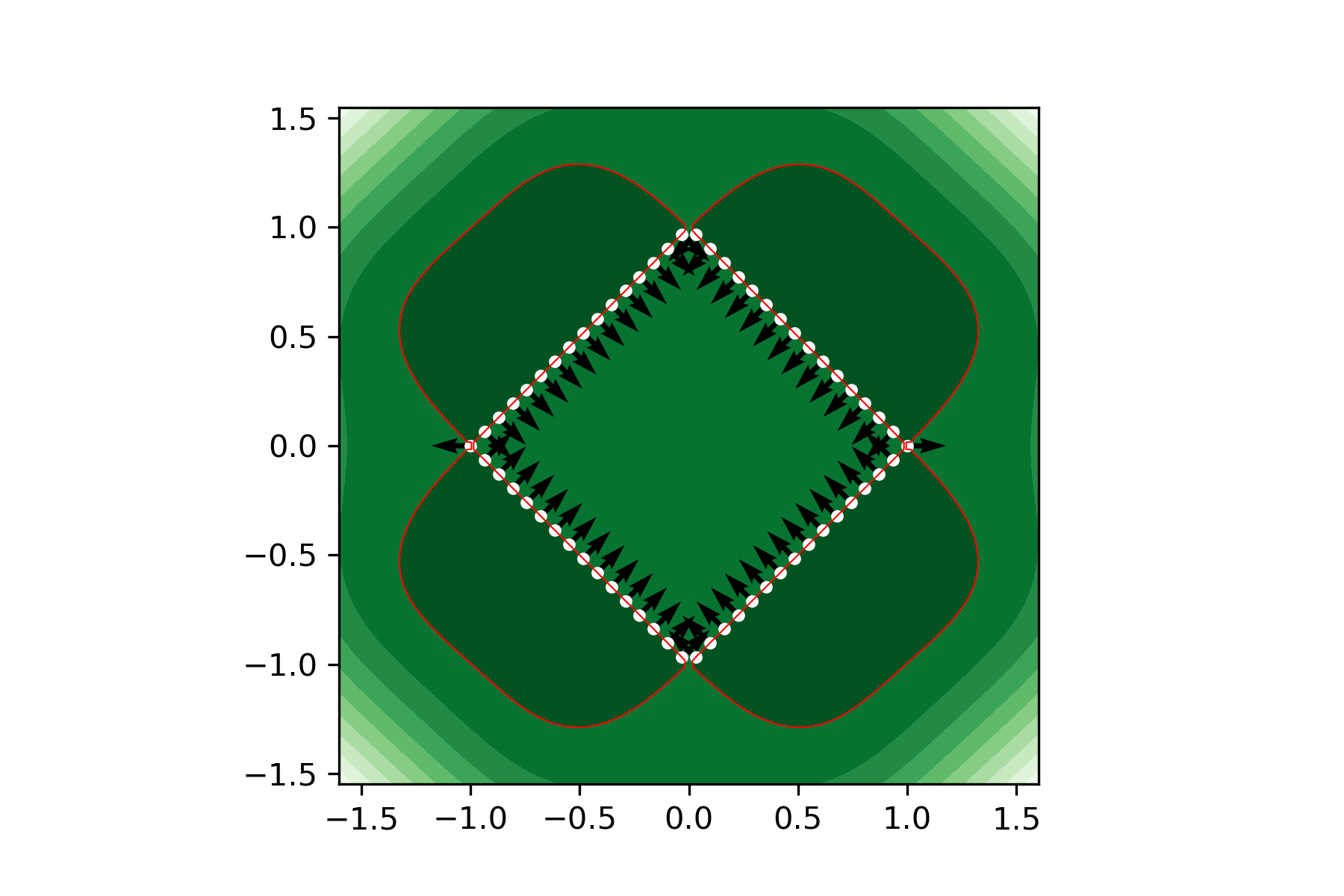}
    \includegraphics[width=0.45\textwidth]{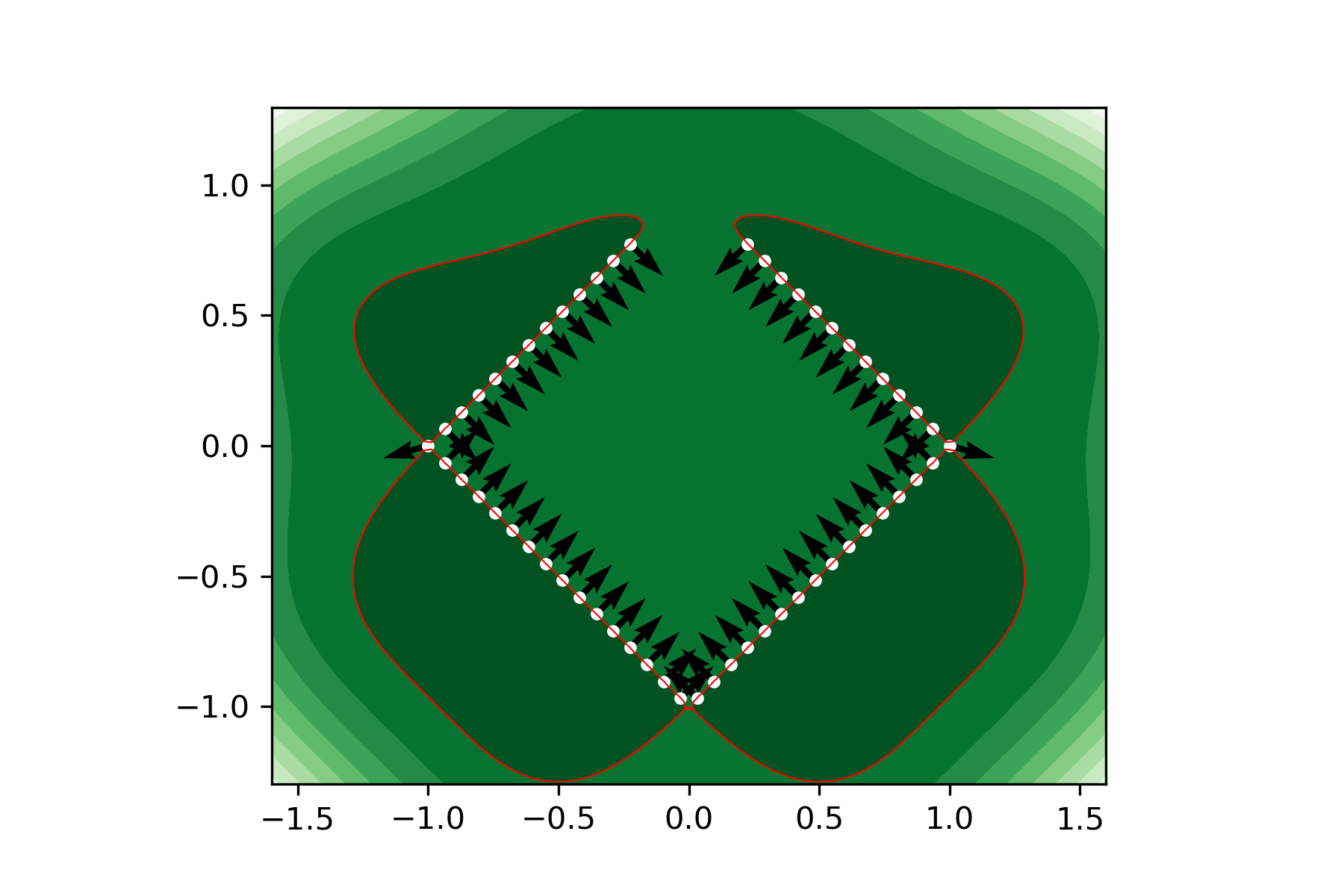}
    \caption{Implied curve of an exact sample of points of a full
      square and one with a corner removed.}
    \label{F:square}
  \end{figure}
These latter examples underscore how the disconnection problems reported in the
literature and observed when using radial basis functions to interpolate
scattered data is in good part due to the regularity of the chosen
kernel (and the implied regularity of the interpolant) and not, 
as sometimes argued \cite{CBCMDMBE01}, to the local nature  (read fast
decay) of the
kernel. In fact the problem can be all but resolved
by using less regular kernels or by regularizing in the way described
before.
\subsection{Quadratic Surfaces}
We briefly consider the case of quadratic surfaces (locally) in order
to revisit the remark about ``flat'' points and to show that the
method is capable of recovering an approximation to the actual
principal curvatures if the symmetry is broken. We consider the
surface defined by $z=\frac{a}{2}x^2 -\frac{b}{2}y^2$ ($a,b>0$) and
concentrate on the origin. When $a\neq b$, the method recovers an
approximation to the principal curvatures. Indeed, taking the patch
of the surface above $[-.5,.5]\times[-.5,.5]$ and using a regular
discretization of $16\times16$ points, Gauss and regularized
($\varepsilon=1$) Laplace yield the results of Table \ref{T:kappasA}.
\begin{table}[h]
\centering
\begin{tabular}{l|c|c}
Kernel & \( \kappa_1 \) & \( \kappa_2 \) \\\hline
Gauss   & \( 1.000 \times 10^{0} \) & \( -2.000 \times 10^{0} \) \\
Laplace & \( 1.003 \times 10^{0} \) & \( -1.992 \times 10^{0} \) \\
\end{tabular}
\caption{Quadratic surface. Principal curvatures in the origin computed using Gauss and
  Laplace kernels for $a=1$ and $b=2$ using 256 points.}
\label{T:kappasA}
\end{table}
The same computation with $a=b=1$ gives the results of Table
\ref{T:kappasB}. In this case, the symmetry leads to very small
gradients, their lengths are $3.1948\times 10^{-11}$ (Gauss) and
$5.6192\times 10^{-12}$ (Laplace) and, hence, to a very inaccurate
numerical estimation of the curvature.
\begin{table}[h]
\centering
\begin{tabular}{l|c|c}
Kernel & \( \kappa_1 \) & \( \kappa_2 \) \\\hline
Gauss   & $-4.4615\times 10^{10}$ & $-3.1650e\times 10^{5}$\\
Laplace &$-3.8913\times 10^9$  & $3.5405\times 10^{7}$\\
\end{tabular}
\caption{Quadratic surface. Principal curvatures in the origin computed using Gauss and
  Laplace kernels for $a=b=1$ using 256 points.}
\label{T:kappasB}
\end{table}
Interestingly, if one replaces the ``regular'' sample with a random
sample of the domain (uniform distribution), Laplace interpolation
(less ill-posed) is able to recover the actual curvatures. Indeed for
a random sample of the same size, one obtains
$$
\kappa_1=9.8534\times 10^{-1},\: \kappa_2=-1.0138\times 10^0,
$$
an approximation, which is consistent over many runs. As previously
mentioned this is a problem encountered when only local information is
available and disappears when dealing with closed smooth surfaces with
a well-defined inside and outside.
\subsection{Laplace-Beltrami Operator of the Sphere}
We consider now the numerical computation of the Laplace-Beltrami
operator of a function $f$ given through its values $\mathbb{Y}$ at
the $m$ points $\mathbb{X}$ of the Fibonacci discretization of the
unit sphere\footnote{The basic idea is to use the golden ratio to
  generate a non-closing spiral of points on the sphere to otain an
  almost uniform distribution (see \cite{SP06,Go10}).}. In Table
\ref{T:sphereLB}, we report
the average relative $L^\infty$-error at 32 
randomly chosen points on the sphere (and hence unrelated to the
discretization points) in the values of the function, its surface
gradient, and Laplace-Beltrami operator applied to it.
\begin{table}[h]
\centering
\begin{tabular}{|r|c|c|c|}\hline
\( m \) & $f$ error  & $\nabla_{\mathbb{S}^2}f$ error &$\Delta_{\mathbb{S}^2}f$ error \\
\hline
64   & \( 3.811 \times 10^{-2} \) & \( 3.857 \times 10^{-1} \) & \( 5.673 \times 10^{-1} \) \\\hline
128  & \( 8.538 \times 10^{-4} \) & \( 1.579 \times 10^{-1} \) & \( 1.654 \times 10^{-2} \) \\\hline
256  & \( 2.623 \times 10^{-6} \) & \( 4.271 \times 10^{-5} \) & \( 3.136 \times 10^{-5} \) \\\hline
512  & \( 1.117 \times 10^{-9} \) & \( 2.971 \times 10^{-8} \) & \( 1.268 \times 10^{-8} \) \\\hline
\end{tabular}
\caption{Average relative $L^\infty$-error at different discretization
levels $m$ for function evaluation (interpolation), surface nabla, and
Laplace-Beltrami evaluation.}
\label{T:sphereLB}
\end{table}
The function chosen is $f(x)=\sin \bigl( \pi(1+2x_3)\bigr)$ and we
work with the Laplace kernel with $\varepsilon=1$. The exact surface
nabla and Laplace-Beltrami can be computed
\begin{align*}
\nabla_{\mathbb{S}^2}f(x)&=2\pi\cos \bigl( \pi(1+2x_3)\bigr)
\begin{bmatrix}
 -x_1x_2&-x_2x_3&1-x_3
\end{bmatrix}^\top\\
\triangle_{\mathbb{S}^2}f(x)&=8\pi (x_3^2-1)\sin \bigl(
\pi(1+2x_3)\bigr)-4\pi x_3\cos \bigl( \pi(1+2x_3)\bigr)
\end{align*}
for $x\in \mathbb{S}^2$ and compared to the numerical values obtained
with the method developed in Section \ref{geoqs} using $u_\mathbb{X}$,
$\nu_\mathbb{X}$, $H_\mathbb{X}$, and $u_{\mathbb{X},\mathbb{Y}}$. We
underline the fact that the method has only knowledge of $f$ at a
sample of points on the sphere and that $u_{\mathbb{X},\mathbb{Y}}$ is
an extension of $f\big |_{\mathbb{X}}$ that, yes, interpolates
$f$ on $\mathbb{S}^2$ but can be quite different away from the
sphere. In particular its gradient and Hessian can have ``little'' to do with
those of $f$.

\subsection{The Ring Torus}
In this section we illustrate how the method computes the principal
curvatures of a torus parametrized by
$$
\left( [R_1+R_2\cos(v)]\cos(u),
[R_1+R_2\cos(v)]\sin(u),R_2\sin(u)\right),\: u,v\in[0,2\pi),
$$
where $R_1=2$ and $R_2=.5$. The curvatures are given by $\kappa_1=-2$ and
$\kappa_2=\frac{\cos(v)}{R_1+R_2\cos(v)}$, i.e. one is the curvature
of any of the vertical circles, while the other depends on the height
of the point considered as well as its distance to the origin. For
this reason we choose $u\in[0,2\pi)$ at random, fix it, and compute
the curvatures numerically at 32 equidistant points on the
corresponding vertical circle. The signature function $u_\mathbb{X}$
is computed for samples of size $|
\mathbb{X}|=64,128,256,512,1024$. These samples are obtained in two
different ways. The first consists in picking $u,v$ independently and
uniformly at random from $[0,2\pi)$ and then accepting the sample
$(u,v)$ with a probability corresponding to the normalized surface
area at the corresponding point (see \cite{foley96}). The second is based on the Fibonacci lattice,
which, while deterministic, does avoid clustering and is almost
uniform. For the first sampling method, Table \ref{T:torus} shows the
$\operatorname{L}^\infty$ and 
$\operatorname{L}^2$ relative errors in the two curvatures (ordered by
size) , while the samples are depicted in Figure \ref{F:torus}. In all
experiments we use $\alpha=10^{-10}$ for both kernels (thus
introducing a limit to the achievable accuracy) and
$\varepsilon=1$ for the Laplace kernel.
{\small
\begin{table}[ht]
\centering
\begin{tabular}{|c|c|c|c|c|}
\hline
\textbf{$m$} & \textbf{$L^\infty$ (Gauss)} & \textbf{$L^2$ (Gauss)} & \textbf{$L^\infty$ (Laplace)} & \textbf{$L^2$ (Laplace)} \\
\hline
64   & $1.85{\times}10^{-1}$ / $1.72{\times}10^{-1}$ & $9.88{\times}10^{-2}$ / $4.41{\times}10^{-2}$ & $1.02{\times}10^{-1}$ / $2.21{\times}10^{-1}$ & $2.20{\times}10^{-1}$ / $1.33{\times}10^{-1}$ \\
128  & $1.77{\times}10^{-2}$ / $1.61{\times}10^{-2}$ & $1.41{\times}10^{-2}$ / $5.24{\times}10^{-3}$ & $5.26{\times}10^{-2}$ / $1.08{\times}10^{-1}$ & $1.72{\times}10^{-2}$ / $9.56{\times}10^{-3}$ \\
256  & $5.09{\times}10^{-5}$ / $1.74{\times}10^{-4}$ & $8.33{\times}10^{-4}$ / $2.75{\times}10^{-4}$ & $8.61{\times}10^{-3}$ / $7.94{\times}10^{-3}$ & $2.01{\times}10^{-3}$ / $9.18{\times}10^{-4}$ \\
512  & $2.23{\times}10^{-8}$ / $1.19{\times}10^{-7}$ & $2.25{\times}10^{-7}$ / $7.86{\times}10^{-8}$ & $1.59{\times}10^{-3}$ / $9.46{\times}10^{-4}$ & $1.08{\times}10^{-3}$ / $3.64{\times}10^{-4}$ \\
1024 & $1.75{\times}10^{-8}$ / $1.36{\times}10^{-8}$ & $5.99{\times}10^{-8}$ / $2.33{\times}10^{-8}$ & $8.51{\times}10^{-6}$ / $4.18{\times}10^{-6}$ & $3.78{\times}10^{-6}$ / $1.39{\times}10^{-6}$ \\
\hline
\end{tabular}
\caption{Randomly sampled torus. Relative $L^\infty$ and $L^2$ errors for Gauss and Laplace
  kernels at different values of $m$. Each entry shows the error
  values for the two principal curvatures ordered according to size.}
\label{T:torus}
\end{table}}

\begin{figure}[ht]
    \centering
    \includegraphics[width=0.4\textwidth]{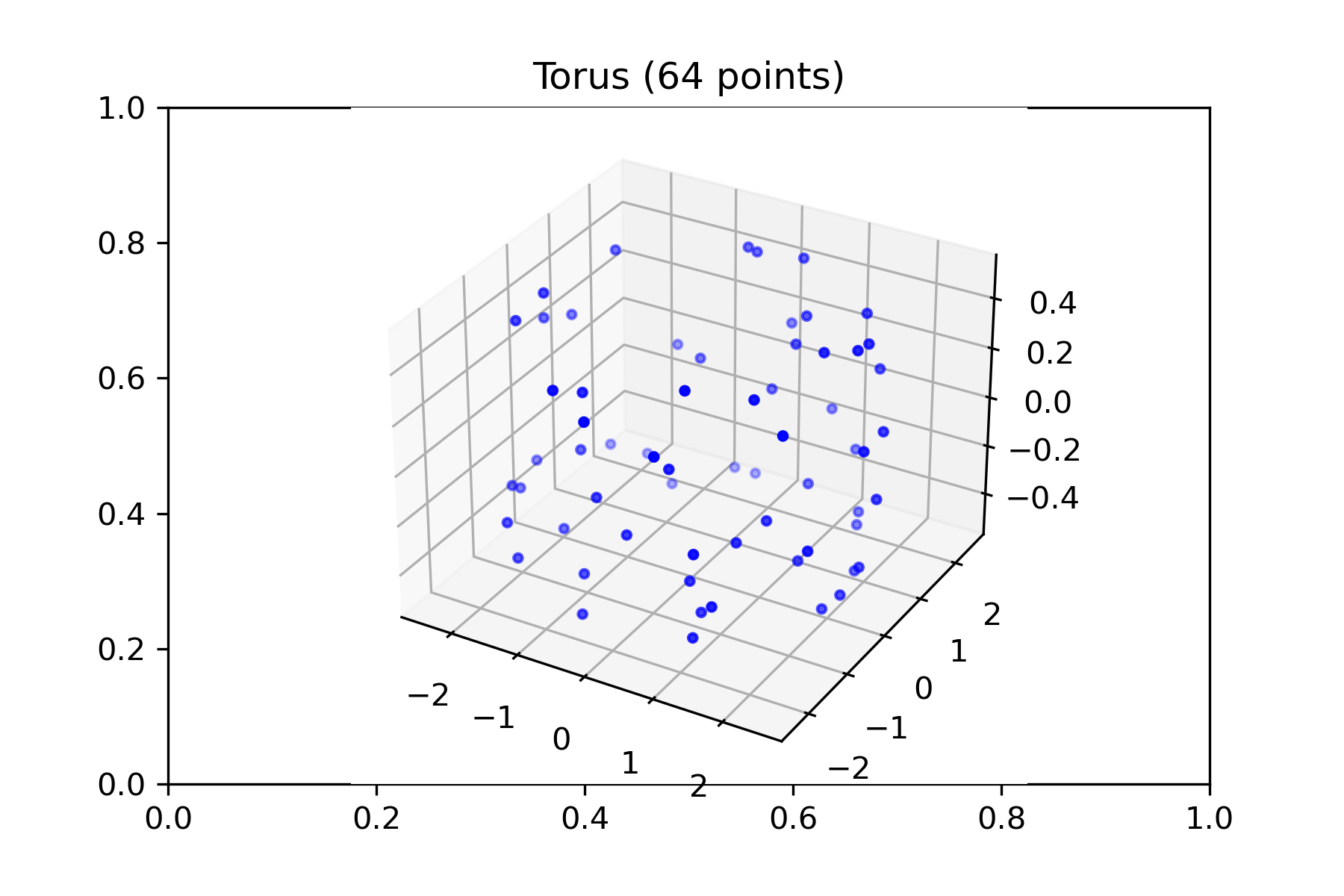}
    \includegraphics[width=0.4\textwidth]{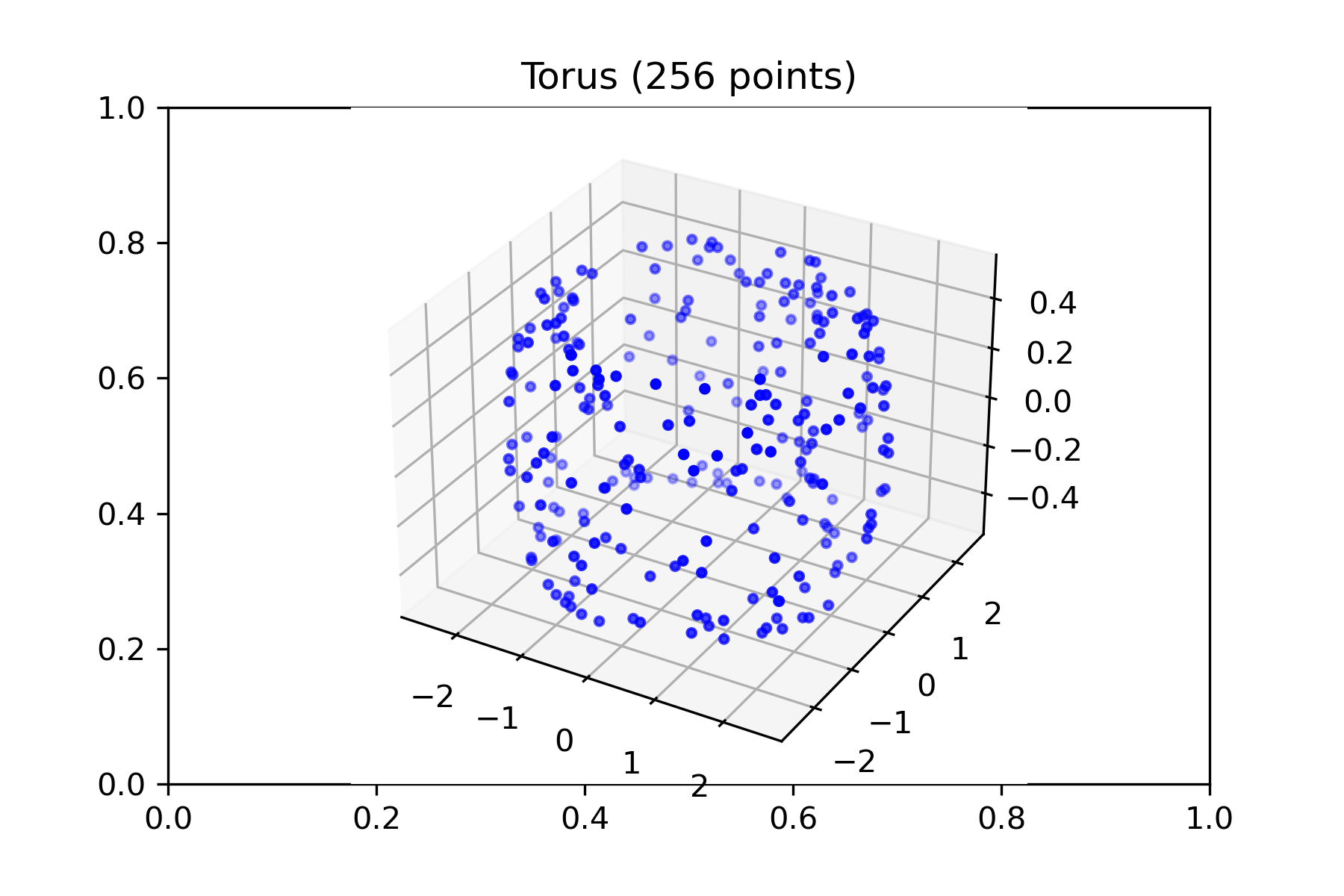}
    \includegraphics[width=0.4\textwidth]{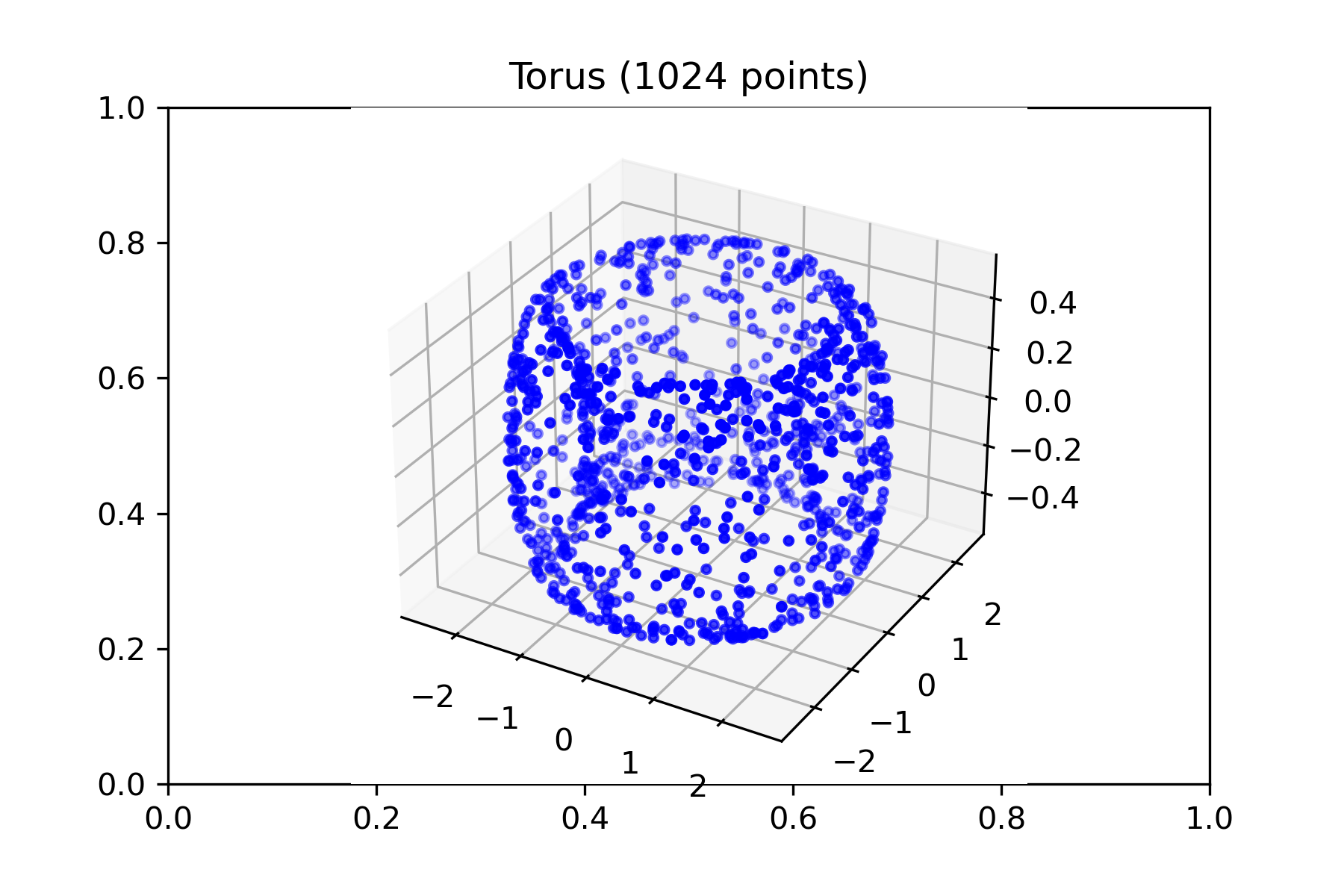}
    \caption{The samples obtained with a probability
      distribution proportional to the surface area.}
    \label{F:torus}
\end{figure}
Table \ref{T:torusF} and Figure \ref{F:torusF} show the corresponding
results obtained using the Fibonacci lattice method.
{\small
\begin{table}[ht]
\centering
\begin{tabular}{|c|c|c|c|c|}
\hline
\textbf{$m$} & \textbf{$L^\infty$ (Gauss)} & \textbf{$L^2$ (Gauss)} & \textbf{$L^\infty$ (Laplace)} & \textbf{$L^2$ (Laplace)} \\
\hline
64   & $2.75{\times}10^{-1}$ / $2.39{\times}10^{-1}$ & $1.86{\times}10^{-1}$ / $9.61{\times}10^{-2}$ & $1.24{\times}10^{-1}$ / $1.08{\times}10^{-1}$ & $8.90{\times}10^{-2}$ / $5.06{\times}10^{-2}$ \\
128  & $1.01{\times}10^{-2}$ / $9.62{\times}10^{-3}$ & $1.48{\times}10^{-2}$ / $6.71{\times}10^{-3}$ & $4.42{\times}10^{-2}$ / $3.27{\times}10^{-2}$ & $2.53{\times}10^{-2}$ / $1.18{\times}10^{-2}$ \\
256  & $3.39{\times}10^{-4}$ / $2.36{\times}10^{-4}$ & $1.66{\times}10^{-4}$ / $6.53{\times}10^{-5}$ & $3.00{\times}10^{-3}$ / $2.08{\times}10^{-3}$ & $5.84{\times}10^{-4}$ / $2.64{\times}10^{-4}$ \\
512  & $3.35{\times}10^{-7}$ / $2.31{\times}10^{-7}$ & $2.64{\times}10^{-7}$ / $8.60{\times}10^{-8}$ & $3.11{\times}10^{-4}$ / $2.08{\times}10^{-4}$ & $3.34{\times}10^{-5}$ / $1.64{\times}10^{-5}$ \\
1024 & $9.93{\times}10^{-10}$ / $8.59{\times}10^{-10}$ & $1.28{\times}10^{-8}$ / $6.10{\times}10^{-9}$ & $1.16{\times}10^{-6}$ / $7.40{\times}10^{-7}$ & $4.79{\times}10^{-7}$ / $1.85{\times}10^{-7}$ \\
\hline
\end{tabular}
\caption{Fibonacci lattice on the torus. Relative $L^\infty$ and $L^2$ errors for Gauss and Laplace kernels at different values of $m$. Each entry shows the error values for the two principal curvatures ordered according to size.}
\label{T:torusF}
\end{table}}
\begin{figure}[ht]
    \centering
    \includegraphics[width=0.4\textwidth]{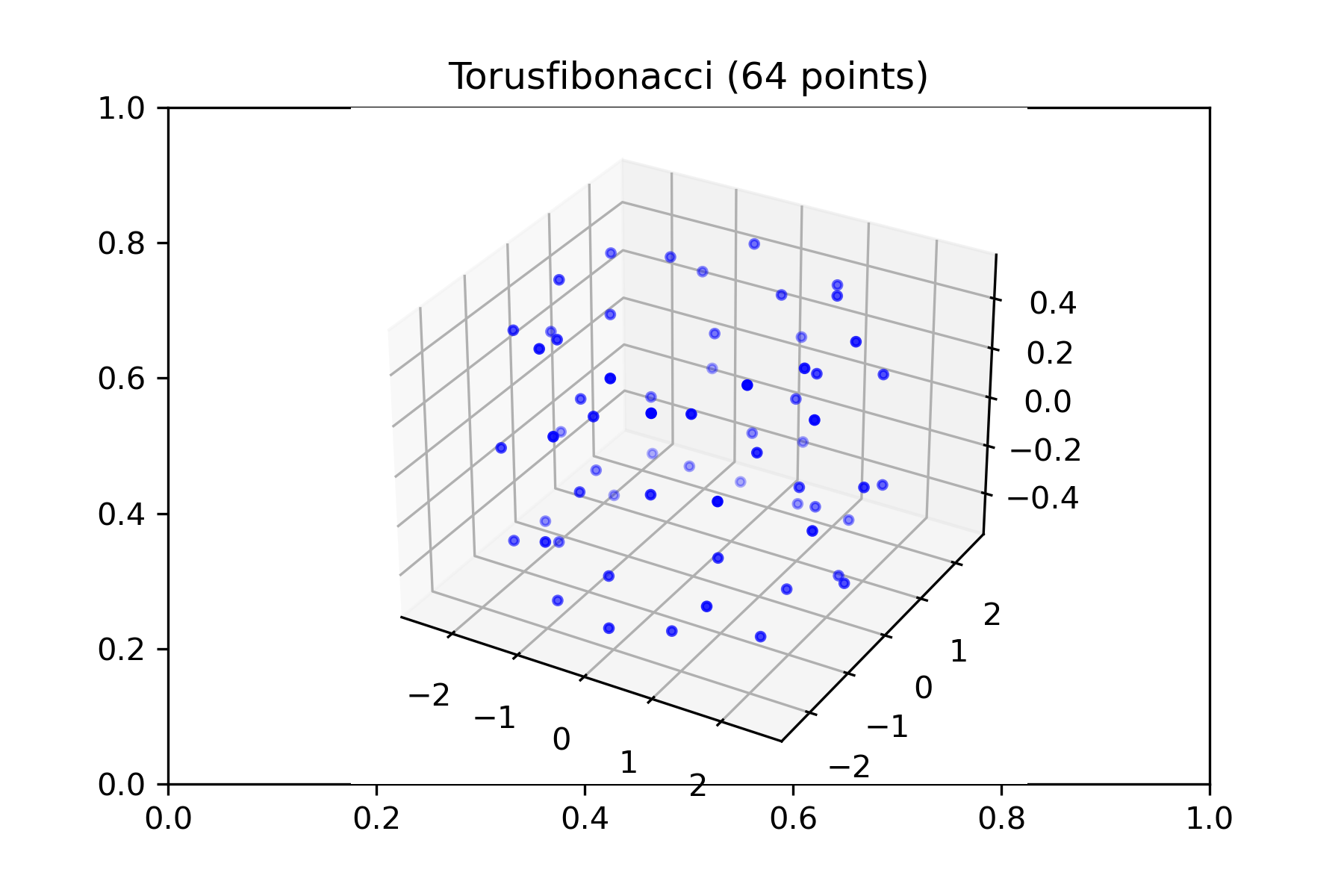}
    \includegraphics[width=0.4\textwidth]{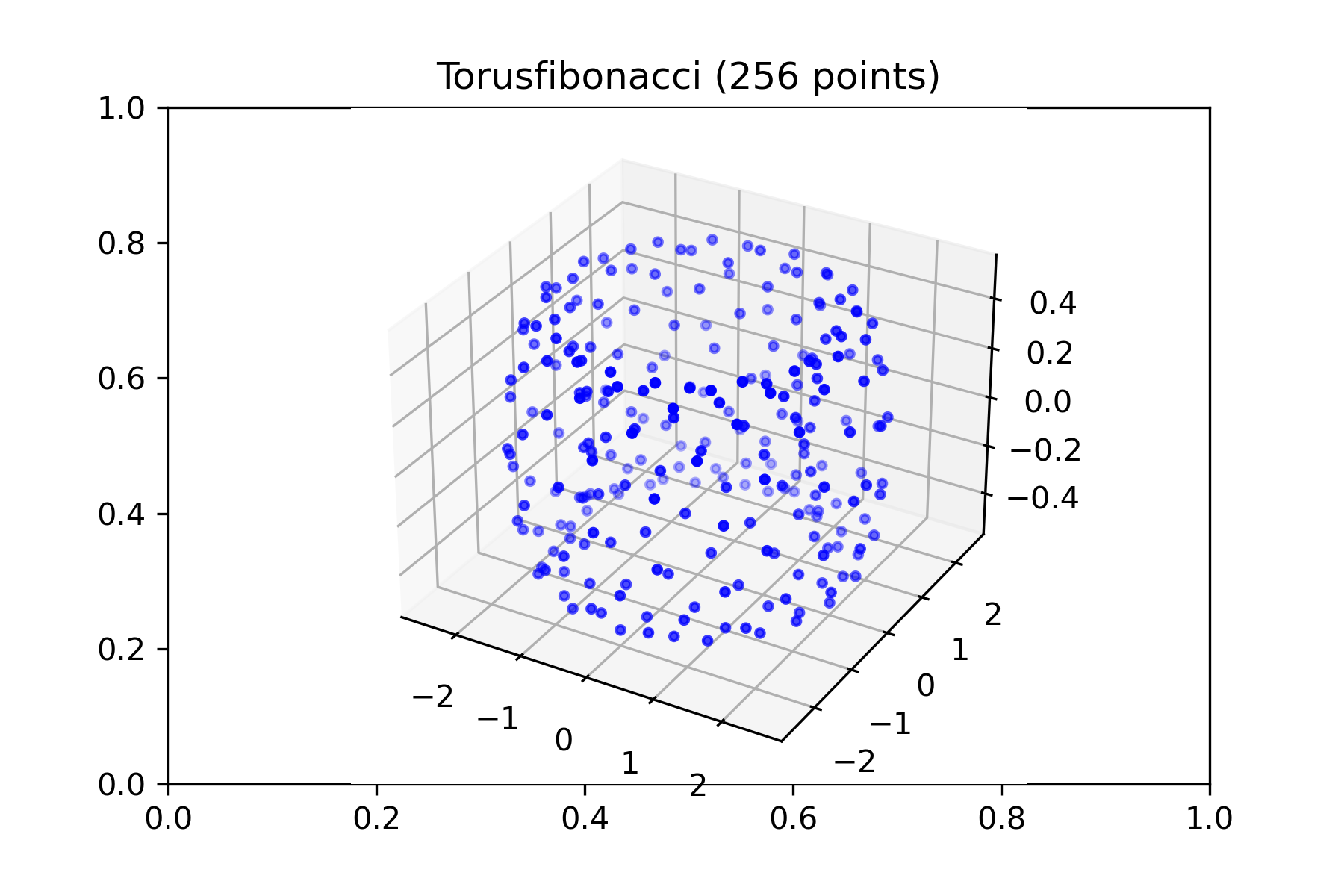}
    \includegraphics[width=0.4\textwidth]{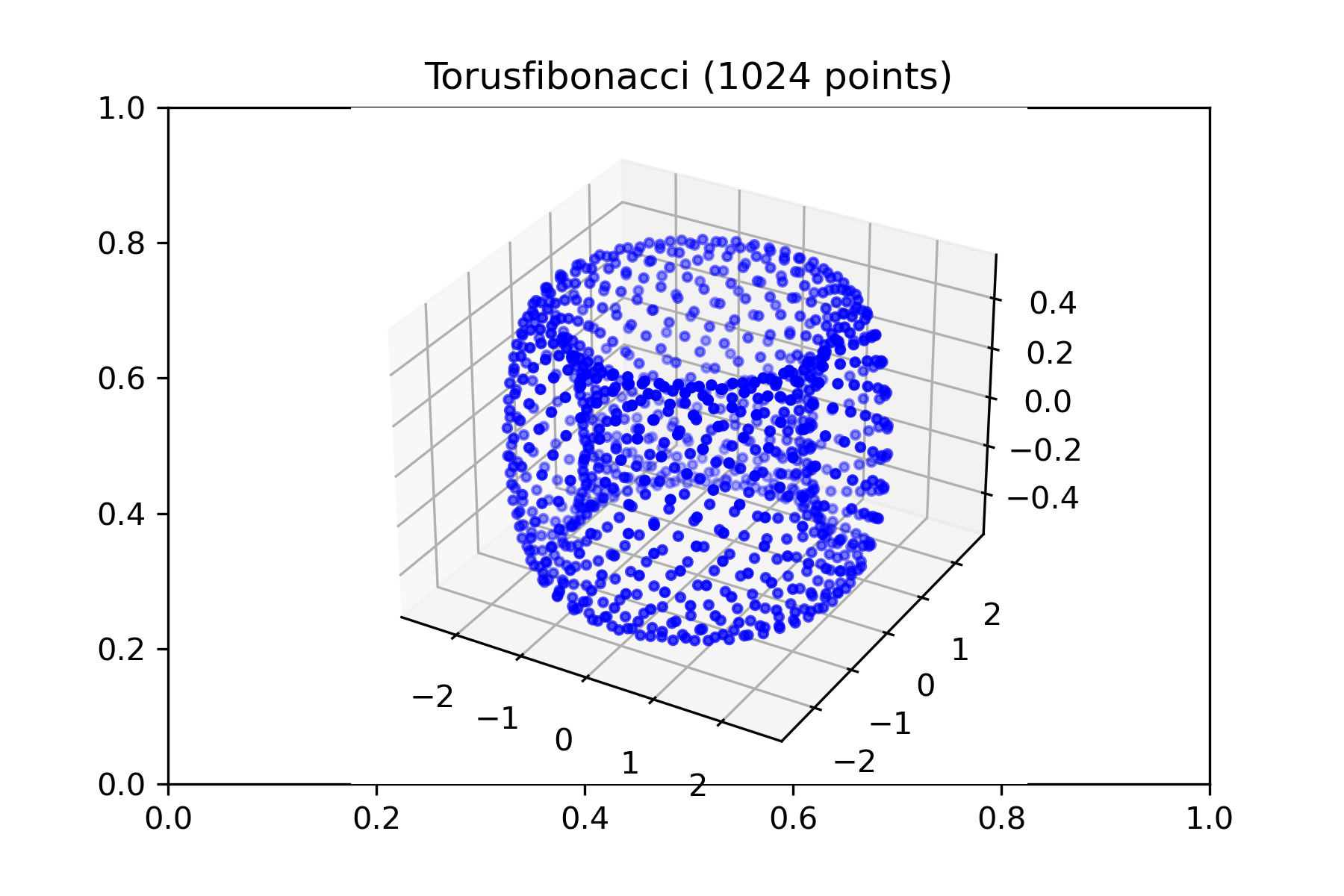}
    \caption{The Fibonacci samples used.}
    \label{F:torusF}
  \end{figure}
\subsection{The Gauss Curvature of an Ellipsoid}
Consider the ellipsoid
$[\frac{x_1^2}{a^2}+\frac{x_2^2}{b^2}+\frac{x_3^2}{c^2}=1]$  with
$a=2$, $b=.5$, $c=1$ and a Fibonacci discretization $\mathbb{X}$
consisting of $m$ points. Use the latter to approximate its Gauss
curvature
$$
K=\kappa_1\kappa_2=\frac{1}{a^2b^2c^2\bigl(\frac{x_1^2}{a^4}+
  \frac{x_2^2}{b^4}+\frac{x_3^2}{c^4}\bigr)^2}
$$
at 32 randomly chosen points (we first choose points uniformly on the sphere
using a normal distribution in the ambient space and normalize, then
scale their components by $a,b$, and $c$, respectively). Table
\ref{T:gaussK} record the relative error observed when using the Laplace kernel
with $\varepsilon=1$ and different discretization levels.
\begin{table}[h!]
\centering
\begin{tabular}{|c|c|c|c|c|c|}
\hline
$|\mathbb{X}|$ & 64 & 128 & 256 & 512 & 1024 \\
\hline
Relative error& $2.428 \times 10^{-1}$ & $8.408 \times 10^{-2}$ &
   $1.186 \times 10^{-2}$ & $5.439 \times 10^{-4}$ & $4.995 \times 10^{-5}$ \\
\hline
\end{tabular}
\caption{Gauss curvature error for an ellipsoid.}
\label{T:gaussK}
\end{table}
\subsection{The Noisy Case}
The framework developed allows one to seemlessly replace interpolation
with approximate interpolation by simply replacing the hard constraint
by a fidelity term. It is interesting that, in this context, this
regularization is also reflected in the level sets of the signature
function and hence has a geometric effect on the level
sets. We illustrate this with a few numerical experiments. We consider
curves again here since they are easier to visualize. In Figure
\ref{F:ellipse} we depict the reconstruction (in red) of an ellipse
based on an equispaced (in parameter domain) sample of 32 points which
are moved in a uniformly chosen random direction by a uniformly random
distance in $[0,0.1]$ and give rise to $\mathbb{X}$. The red curve
corresponds to the level $\overline{u}_\mathbb{X}$ given by
$$
\overline{u}_\mathbb{X}=\frac{1}{|\mathbb{X}|}\sum_{x\in
  \mathbb{X}}u_{\mathbb{X}}(x),
$$
i.e. the average value of the signature function on the data
set (which would be 1 for exact interpolation). Notice that we used
the regularization parameter $\alpha=.1$ to
counteract the effect of noise. We also visualize the implied normals
as well as the
implied sublevel sets as explained at the beginning of the section. In
addition we also draw in yellow the implied level 1 of the
exact interpolant computed on the unperturbed data set (no noise) to provide an
idea of the quality of the reconstruction.
\begin{figure}[ht]
    \centering
    \includegraphics[width=0.45\textwidth]{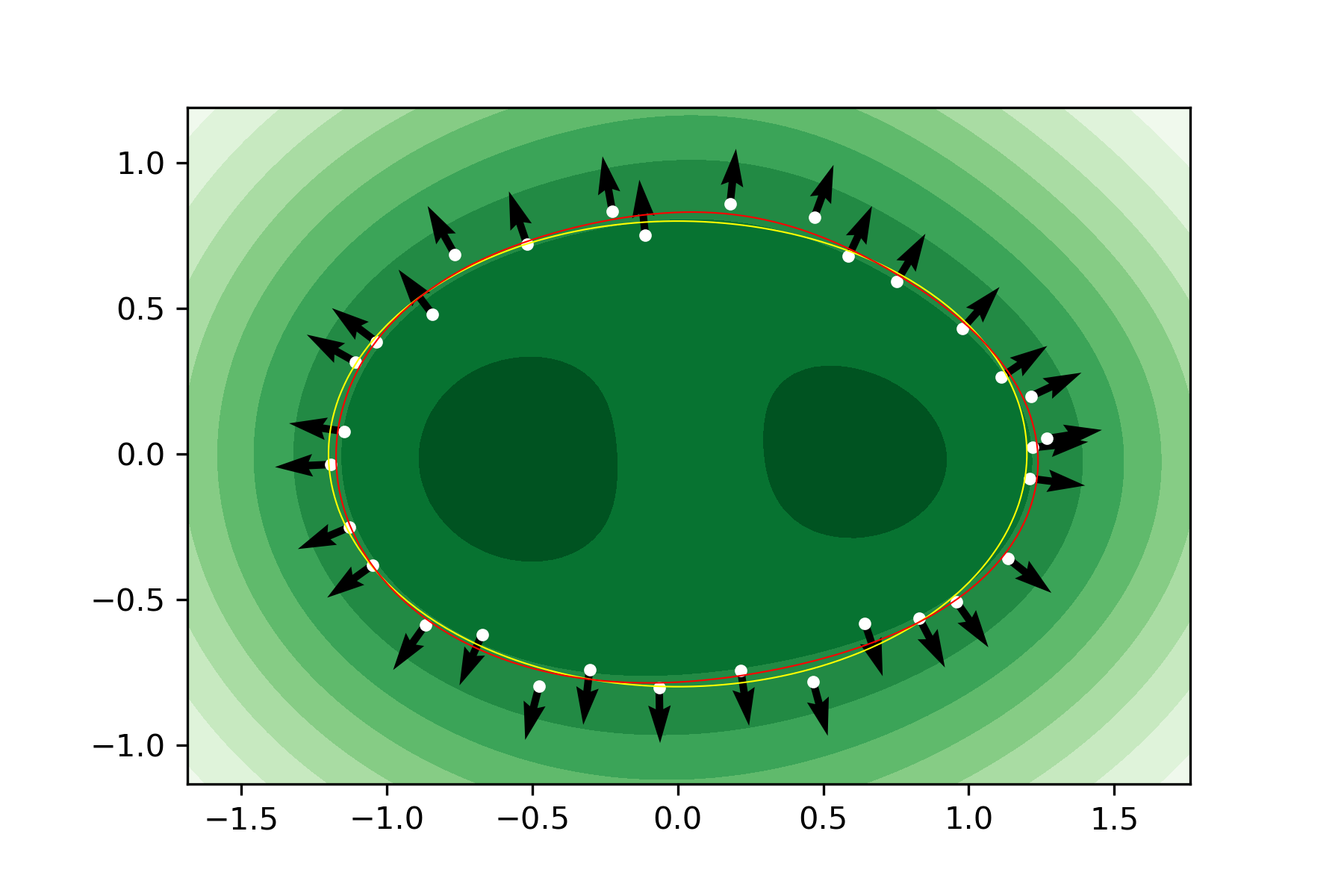}
    \includegraphics[width=0.45\textwidth]{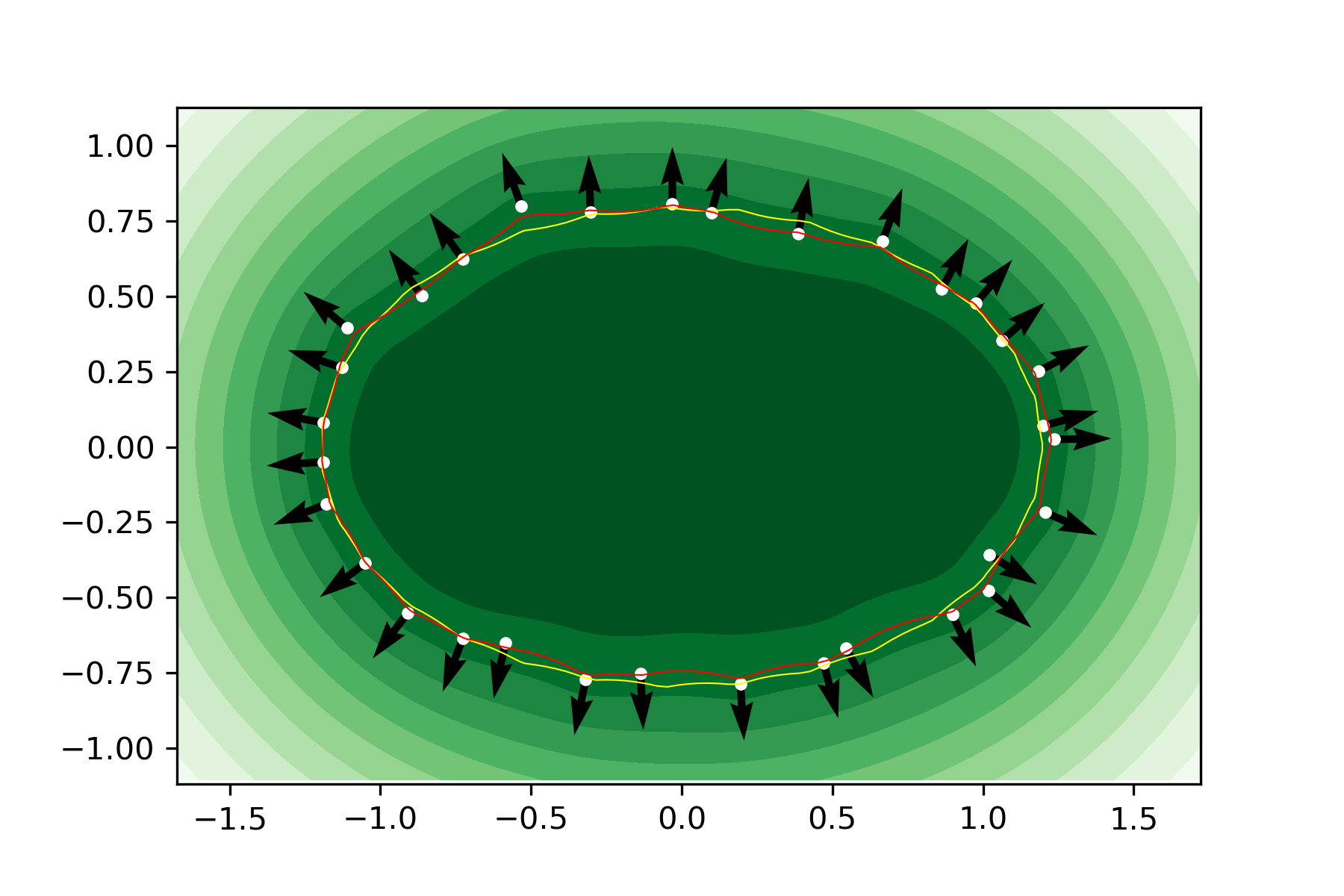}
    \caption{Reconstruction of an ellipse from a randomly perturbed
      sample of its points. The Gauss kernel is used on the left,
      while on the right the Laplace kernel is used with $\varepsilon=10^{-5}$.}
    \label{F:ellipse}
  \end{figure}
Notice how the Laplace kernel (with little regularization
$\varepsilon=10^{-5}$) exhibits high curvature at points in $\mathbb{X}$ and
in the original data set (not directly shown). If the Laplace kernel
is regularized more ($\varepsilon=1$), then it performs similarly to the
Gauss kernel as depicted in Figure \ref{F:ellipseSmooth} (left).
\begin{figure}[ht]
    \centering
    \includegraphics[width=0.45\textwidth]{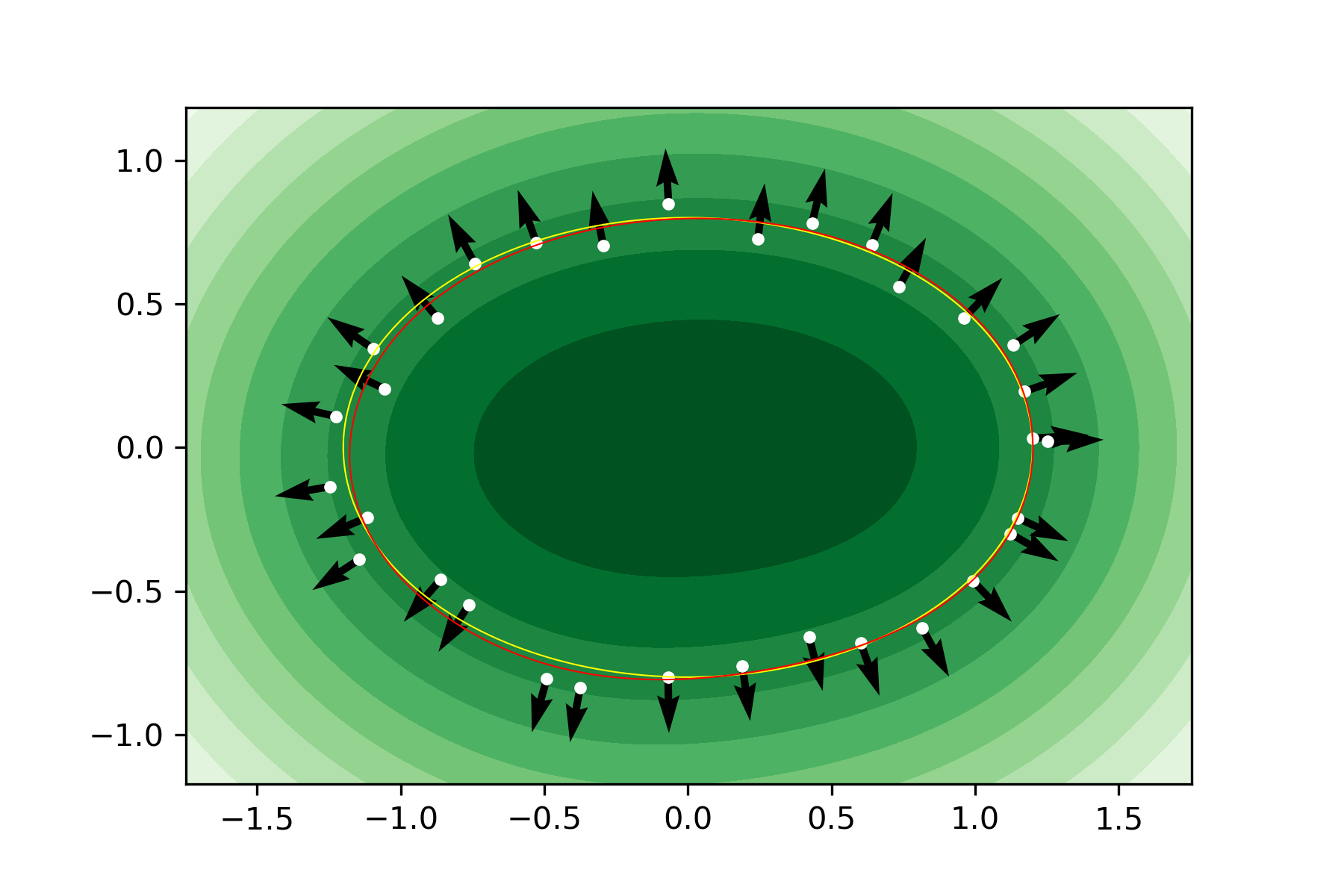}
    \includegraphics[width=0.45\textwidth]{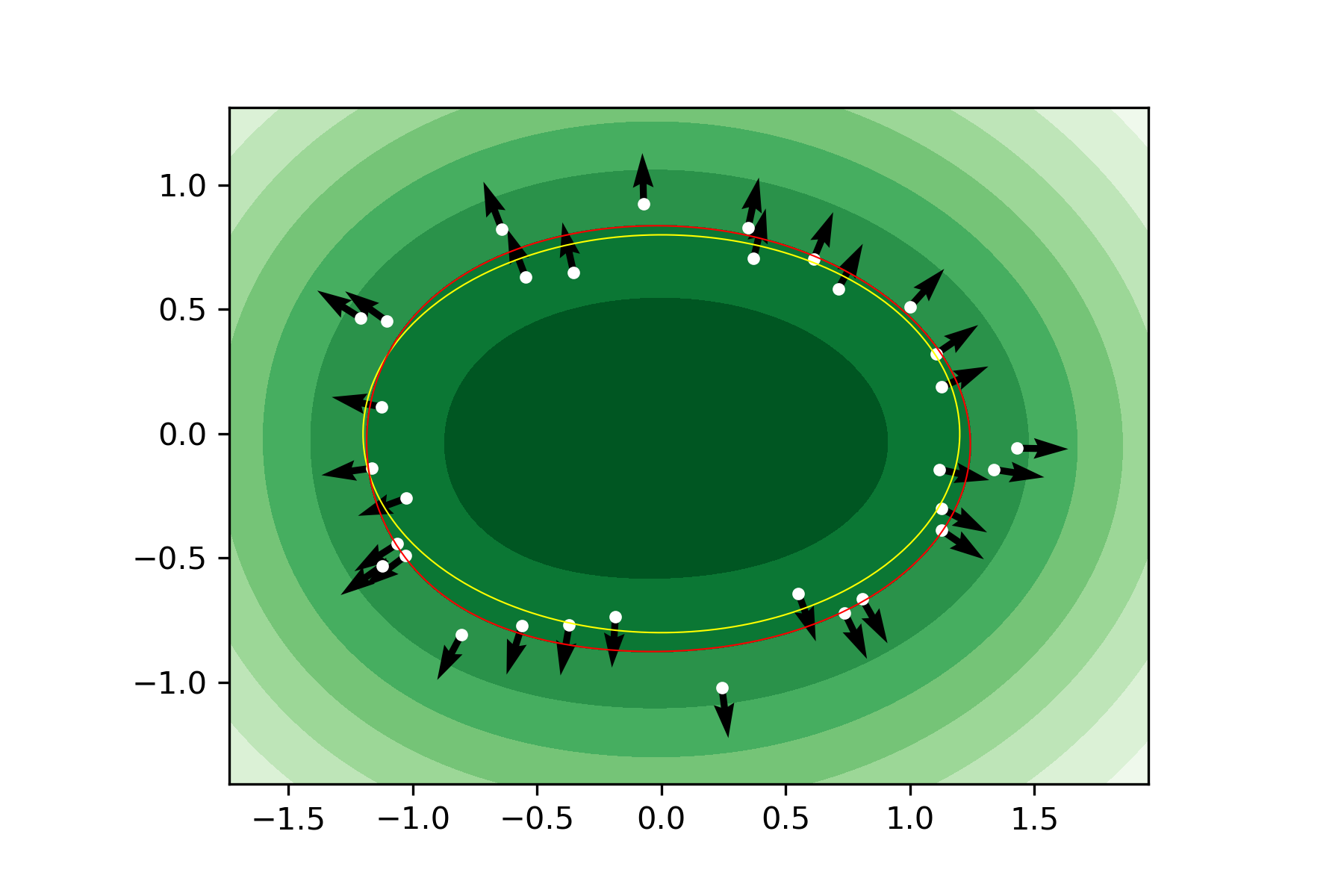}
    \caption{Reconstruction of an ellipse from a randomly perturbed
      sample of its points. The Laplace kernel is used with $\varepsilon=1$.}
    \label{F:ellipseSmooth}
  \end{figure}
The right picture in Figure \ref{F:ellipseSmooth} shows the
reconstruction obtained with the Laplace kernel ($\varepsilon=1$) when
each orginal datum is independently moved in a uniformly distributed
random direction to a random distance uniformly chosen in $[0,0.25]$.
\begin{rem}
  Finally we remark that, in some circumstances, it may be
  advantageous to use different kernels; one to obtain geometric
  information about $\mathbb{X}$ and one to interpolate the values
  $\mathbb{Y} $. This could be the case when dealing with functions of
  limited regularity defined on a smooth manifold.
\end{rem}
\bibliography{lite.bib} 
\end{document}